\newcommand{\cA}{{\mathfrak A}}
\newcommand{\cS}{{\mathfrak S}}
\newcommand{\cP}{{\mathfrak P}}
\newcommand{\cM}{{\mathfrak M}}
\newcommand{\cG}{{\mathfrak G}}
\newcommand{\cF}{{\mathfrak F}}
\newcommand{\cC}{{\mathfrak C}}
\newcommand{\cU}{{\mathfrak U}}
\newcommand{\cT}{{\Bbb{T}}}
\newtheorem{theorem}{Theorem}[section]
\newtheorem{proposition}{Proposition}[section]
\newtheorem{example}{Example}
\newtheorem{rem}{Remark}[section]
\newtheorem{definition}{Definition}[section]
\newtheorem{cor}{Corollary}[section]
\newcommand{\bgeqn}{\begin{eqnarray}}
\newcommand{\edeqn}{\end{eqnarray}}
\newcommand{\bgeq}{\begin{eqnarray*}}
\newcommand{\edeq}{\end{eqnarray*}}
\newcommand{\bec}{\begin{center}}
\newcommand{\enc}{\end{center}}
\def\ex{\mathop{\rm Exp}}
\newcommand{\D}{{\cal D}}
\newcommand{\V}{{\cal V}}
\newcommand{\Z}{{\cal Z}}
\newcommand{\F}{{\cal F}}
\newcommand{\G}{{\cal G}}
\newcommand{\M}{{\cal M}}
\newcommand{\OO}{{\cal O}}
\newcommand{\fb}{\rule[-2pt]{4pt}{8pt}}
\newcommand{\half}{ \mbox{\small$\frac{1}{2}$}}
\newcommand{\be}{\begin{equation}}
\newcommand{\ee}{\end{equation}}
\newcommand{\vr}{{\varrho}}
\newcommand{\lan}{\langle}
\newcommand{\ran}{\rangle}
\def\w{\omega}
\def\e{\varepsilon}
\def\O{\Omega}
\def\fo {{\preceq}}			
\def\fos {{\prec}}			
\def\so {{\preccurlyeq}}	
\newcommand{\avr}{{\sf AV@R}}
\newcommand{\AVaR}{{\sf AV@R}}
\newcommand{\VaR}{{\sf V@R}}
\def\bbr{{\Bbb{R}}} 
\def\bbe{{\Bbb{E}}} 
\newcommand{\ind}{{\mbox{\boldmath$1$}}}
\newtheorem{remark}{Remark}
\begin{document}

\title{\bf Uniqueness of Kusuoka Representations}

\author{
{\bf Alois Pichler}\\
Department of Statistics and Operations Research\\
University of Vienna\\
Vienna, Austria
 \and
{\bf Alexander Shapiro}\thanks{Research of this author  was partly supported by the NSF award CMMI 	1232623.}\\
School of Industrial and Systems Engineering\\
Georgia Institute of Technology\\
Atlanta, GA 30332-0205\\
}
\date{\today}

\maketitle

\begin{abstract}
This paper addresses law invariant  coherent  risk measures and their Kusuoka representations. By elaborating the existence of a minimal representation  we show that every Kusuoka representation can be reduced to its minimal representation. Uniqueness~-- in a sense specified in the paper -- of the risk measure's Kusuoka representation is derived from this initial result.
The Kusuoka representation is usually given   for nonatomic probability spaces. We also discuss Kusuoka representations for spaces with atoms.

Further, stochastic order relations are employed to identify the minimal Kusuoka representation. It is shown that measures in the minimal representation are extremal with respect to the order relations.
The tools are finally employed to provide the minimal representation for important practical examples.
\end{abstract}

\emph{Keywords}: Law invariant coherent measure of risk, Fenchel-Moreau theorem, Kusuoka representation, stochastic order relations.

\section{Introduction}
\label{sec:Introduction}
\setcounter{equation}{0}

This paper addresses Kusuoka representations \cite{Kusuoka} of  law invariant, coherent  risk measures. In a sense such representations are natural and useful in various applications of risk measures. Original derivations were performed in \cite{Kusuoka, Schachermayer} in $L_\infty(\O,\F,P)$ spaces. For an analysis in $L_p(\O,\F,P)$, $p\in [1,\infty)$, spaces we may refer to
Pflug and  R\"omisch \cite{pfl07} (cf.~also \cite{Filipovic2012, Grechuk2012}). It is known that Kusuoka representations are not unique (cf.~\cite{pfl07}). This raises the question of    in some sense   minimality of  such representations. It was shown in \cite{mor12} that indeed for spectral risk measures, the Kusuoka representation with a single probability measure is unique. It was also posed a question whether such minimal representation is unique in general? In this paper we give a positive answer to this question and show how such minimal representations can be derived in a constructive way. We also discuss Kusuoka representations for spaces which are not nonatomic.

\medskip
Throughout the paper we work with   spaces $\Z:=L_p(\O,\F,P)$, $p\in [1,\infty)$, rather than $L_\infty(\O,\F,P)$ spaces. That is, $Z\in \Z$ can be viewed as a random variable with finite $p$-th order moment with respect to the reference probability distribution $P$. The space $\Z$ equipped with the respective norm is a Banach space with the dual space of continuous linear functionals  $\Z^*:=L_q(\O,\F,P)$, where $q\in (1,\infty]$ and $1/p+1/q=1$.

It is said that a (real valued) functional  $\rho:\Z\to \bbr$ is a coherent risk measure if it satisfies the following axioms (Artzner et al.~\cite{ADEH:1999}):

\begin{description}
\item[{\rm (A1)}]
 {\textsl Monotonicity:} If $Z,Z'\in\Z$ and  $Z\succeq Z'$,
then $\rho(Z)\geq \rho(Z')$.
\item [{\rm (A2)}]
{\textsl Convexity:}
\[
\rho(tZ  + (1-t)Z') \le t \rho(Z) +
(1-t)\rho(Z')
\]
\
 for all $Z,Z'\in\Z$ and all $t\in [0,1]$.
\item[{\rm (A3)}]  {\textsl Translation Equivariance:} If $a\in \bbr$ and $Z\in\Z$, then $\rho(Z+a)=\rho(Z)+a$.
\item [{\rm (A4)}]  {\textsl Positive Homogeneity:} If $t\ge 0$ and $Z\in \Z$, then
$\rho(t Z)=t\rho(Z)$.
\end{description}
The notation $Z \succeq Z'$ means that $Z(\w)\geq Z'(\w)$ for a.e. $\w\in \Omega$.
For a thorough discussion of coherent risk measures we refer to F\"ollmer and   Schield \cite{fol04}.

We say that $Z_1,Z_2\in \Z$ are {\em distributionally equivalent} if
$F_{Z_1}=F_{Z_2}$, where $F_Z(z):=P(Z\le z)$ denotes the cumulative distribution function (cdf) of $Z\in \Z$.
It is said that risk measure $\rho:\Z\to\bbr$ is {\em law invariant} (with respect to the reference probability measure $P$)  if
for any distributionally equivalent  $Z_1,Z_2\in \Z$ it follows that $\rho(Z_1)=\rho(Z_2)$. {\em Unless stated otherwise we assume that $\rho$ is a (real valued)  law invariant coherent risk measure.} An important example of a law invariant coherent risk measure is
 the (upper) Average Value-at-Risk
\begin{equation}\label{2k}
\avr_\alpha(Z):=\inf_{t\in \bbr}\big\{t+(1-\alpha)^{-1}\bbe [Z-t]_+\big\}=(1-\alpha)^{-1}\int_\alpha^1 F_Z^{-1}(\tau)d\tau,
\end{equation}
where
$
F_Z^{-1}(\tau):=\sup\{t:F_Z(t)\le \tau\}
$
is the right   side quantile function. Note that $F_Z^{-1}(\cdot)$ is a monotonically nondecreasing right side continuous function.
We denote by $\cP$ the set of probability measures on $[0,1]$ having zero mass at 1. When talking about topological properties of $\cP$ we use    the {\em  weak topology} of probability  measures. Note that by Prohorov's theorem the set $\cP$ is compact, as it is closed and  tight \cite[p.59]{billin}.

We say that a set $\cM$ of probability measures on $[0,1]$   is a
{\em Kusuoka set} if the following representation holds
\begin{equation}\label{1k}
 \rho(Z)=\sup_{\mu\in \cM}\int_0^1 \avr_\alpha (Z)d\mu(\alpha),\;\;Z\in \Z.
\end{equation}
 Note that since $\rho(Z)$ is finite valued for every $Z\in L_p(\O,\F,P)$, with  $p\in [1,\infty)$, every measure  $\mu\in \cM$  in representation (\ref{1k}) has zero mass at $\alpha=1$ and hence $\cM\subset \cP$. Note also that if $\cM$    is a  Kusuoka set, then its topological closure is also   a Kusuoka set (cf.~\cite[Proposition 1]{mor12}).

\medskip
The paper is organized as follows. In the next section we discuss minimality and uniqueness of the Kusuoka representations. In Section~\ref{sec:general} we consider Kusuoka representations on general, not necessarily nonatomic, probability spaces. In Section~\ref{sec;maxim} we investigate  maximality of Kusuoka representations with respect to some order relations. In Section~\ref{sec:examp} we discuss some examples, while Section~\ref{sec:concl} is devoted to conclusions.

\section{Uniqueness of Kusuoka sets}
\label{sec:Kusuoka}
\setcounter{equation}{0}

It is known that the Kusuoka representation is not unique in general. In this section we elaborate that there is minimal Kusuoka representation in the sense outlined below. To obtain the result we shall relate the Kusuoka representation to spectral risk measures first and then outline the results.

\medskip
We can view the integral in the right hand side of (\ref{1k}) as the Lebesgue-Stieltjes integral with $\mu:\bbr\to [0,1]$ being a right side continuous monotonically nondecreasing function (distribution function) such that  $\mu(t)=0$ for $t<0$ and $\mu(t)=1$ for $t\ge 1$.
For example, take $\mu(t)=0$ for $t<0$ and $\mu(t)=1$ for $t\ge 0$. This is a distribution  function corresponding to a measure of mass one at $\alpha=0$.  Thus \begin{equation}\label{integr}
\int_0^1 \avr_\alpha (Z)d\mu(\alpha)=\avr_0(Z)=\bbe[Z],
\end{equation}
where the integral is understood as taken from $0^-$ to 1.
\paragraph{Note:} When considering integrals of the form $\int_0^\gamma h(\alpha)d\mu(\alpha)$, $\gamma\ge 0$,  with respect to a distribution function $\mu(\cdot)$ we always assume that the integral is  taken from $0^-$. \medskip

Assume that the space $(\O,\F,P)$ is nonatomic. Then, without loss of generality, we can take this space to be the interval $\O=[0,1]$ equipped with its Borel sigma algebra and the uniform probability measure $P$.
We refer to this space as the {\em standard} probability space.
Unless stated otherwise  we assume that $(\O,\F,P)$ is the standard probability space.
\begin{definition}
\label{def-group}
We denote by $\cF$ the group of measure-preserving transformations $T:\O\to\O$, i.e., $T$ is one-to-one,  onto and $P(A)=P(T(A))$ for any $A\in \F$.
\end{definition}

\begin{remark}
\label{rem-de}
{\rm
Note that    two random variables $Z_1,Z_2:\O\to\bbr$ have the same probability distribution iff there exists a measure-preserving transformation $T\in \cF$ such that\footnote{The notation  $(Z\circ T)(\w)$ stands for the composition $Z(T(\w))$).}
 $Z_2=Z_1\circ T$ (e.g., \cite{Schachermayer}).   Also
 a random variable $Z:\O\to\bbr$ is distributionally equivalent to $F_Z^{-1}$, i.e.,  there exists  $T\in \cF$ such that a.e.  $F_Z^{-1}=Z\circ T$ (e.g., \cite{mor12}).
}
\end{remark}

 \begin{definition}
 \label{def:specfun}
 We say that   $\sigma:[0,1)\to\bbr_+$ is a {\rm spectral function} if $\sigma(\cdot)$ is  right side continuous, monotonically nondecreasing and  such that $\int_0^1\sigma(t)dt=1$.
The set of spectral functions will be denoted by
  $\cS$.
\end{definition}

Consider the linear mapping  $\cT:\cP\to \cS$ defined as
\begin{equation}\label{3k}
(\cT\mu)(\tau):=\int_0^\tau (1-\alpha)^{-1}d\mu(\alpha),\;\tau\in [0,1).
\end{equation}
 This mapping is onto,  one-to-one with the inverse $\mu=\cT^{-1}\sigma$ given by
\begin{equation}\label{4k}
\mu(\alpha)=\left(\cT^{-1}\sigma\right)(\alpha)=(1-\alpha)\sigma(\alpha)+\int_0^\alpha \sigma(\tau)d\tau,\;\;\alpha\in [0,1]
\end{equation}
(cf.~\cite[Lemma 2]{mor12}). In particular, let
$\mu:=\sum_{i=1}^n c_i\delta_{\alpha_i}$, where $\delta_\alpha$ denotes the probability  measure of mass one at $\alpha$,  $c_i$ are positive numbers such that $\sum_{i=1}^n c_i=1$, and $0\le \alpha_1<\alpha_2<\cdots<\alpha_n<1$. Then \begin{equation}\label{discr1}
 \cT\mu =\sum_{i=1}^n \frac{c_i}{1-\alpha_i}\ind_{[\alpha_i,1]},
\end{equation}
where $\ind_A$ denotes the indicator function of set $A$.

The (real valued) coherent risk measure $\rho:\Z\to\bbr$ is continuous (in the norm topology of $\Z=L_p(\O,\F,P)$)  \cite{RS:2006},  and  by the Fenchel-Moreau theorem has the following dual representation
\begin{equation}\label{7k}
 \rho(Z)=\sup_{\zeta\in \cA}\,\lan \zeta,Z\ran,\;\;Z\in \Z,
\end{equation}
where $\cA\subset \Z^*$ is a convex and weakly$^*$ compact  set of density functions and the scalar product on $\Z^*\times \Z$ is defined as
$\lan \zeta,Z\ran:=\int_0^1 \zeta(\tau) Z(\tau)d\tau.$
Since $\rho$ is law invariant, the dual set $\cA$ is invariant under the group $\cF$ of measure preserving transformations, i.e., if $\zeta\in \cA$, then $\zeta\circ T\in \cA$ for any $T\in \cF$ (cf.~\cite{mor12}).

Note that if
\begin{equation}
\label{rep1}
 \rho(Z)=\sup_{\zeta\in \cC}\lan \zeta,Z\ran,\;\;Z\in \Z,
\end{equation}
holds for some set $\cC\subset \Z^*$, then $\cC\subset \cA$. For a set $\Upsilon\subset \Z^*$ we denote
\begin{equation}
\label{orbit}
\OO(\Upsilon):=\{\zeta\circ T: T\in \cF,\;\zeta\in \Upsilon\}
\end{equation}
its orbit with respect to the group $\cF$.

\begin{definition}
\label{def-genset}
We say that a set  $\Upsilon\subset \cS$ of spectral functions  is a {\em generating set} if
the representation (\ref{rep1}) holds for $\cC:= \OO(\Upsilon)$ (cf.~\cite[Definition 1]{mor12}).
That is,
\begin{equation}\label{genrepr}
 \rho(Z)=\sup_{\sigma\in \Upsilon}\int_0^1 \sigma(\tau)F_Z^{-1}(\tau)d\tau,\;\;Z\in \Z.
\end{equation}
\end{definition}

It follows  that if $\Upsilon$    is a  generating set, then
 $\OO(\Upsilon)\subset \cA$.

\begin{proposition}
\label{pr-gen}
A set $\cM\subset \cP$ is a Kusuoka set iff the set $\Upsilon:=\cT(\cM)$ is a generating set.
\end{proposition}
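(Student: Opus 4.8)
The plan is to reduce the stated equivalence to a single pointwise identity and then let the bijectivity of $\cT$ do the rest. Concretely, the key claim is that for any fixed $\mu\in\cP$ with image $\sigma:=\cT\mu\in\cS$, the Kusuoka term and the spectral term agree:
\[
\int_0^1\avr_\alpha(Z)\,d\mu(\alpha)=\int_0^1\sigma(\tau)F_Z^{-1}(\tau)\,d\tau,\qquad Z\in\Z.
\]
Granting this, the proposition is immediate: taking the supremum over $\mu\in\cM$ on the left, and recalling that $\cT$ is onto and one-to-one so that $\{\cT\mu:\mu\in\cM\}$ is exactly $\Upsilon=\cT(\cM)$ on the right, turns the Kusuoka representation \rfp{1k} into the generating-set representation \rfp{genrepr}, and conversely. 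Since $\cT(\cM)\subset\cS$ automatically, $\Upsilon$ is always an admissible candidate set of spectral functions, so both directions of the ``iff'' follow at once.

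To establish the pointwise identity, first I would substitute the quantile formula $\avr_\alpha(Z)=(1-\alpha)^{-1}\int_\alpha^1 F_Z^{-1}(\tau)\,d\tau$ from \rfp{2k} into the left-hand integral, obtaining
\[
\int_0^1\avr_\alpha(Z)\,d\mu(\alpha)=\int_0^1\!\!\int_\alpha^1 (1-\alpha)^{-1}F_Z^{-1}(\tau)\,d\tau\,d\mu(\alpha).
\]
I would then interchange the order of integration over the triangular region $\{(\alpha,\tau):0\le\alpha\le\tau<1\}$. Carrying out the $\alpha$-integration first and recalling the definition \rfp{3k} of $\cT$ produces the inner factor $\int_0^\tau(1-\alpha)^{-1}d\mu(\alpha)=(\cT\mu)(\tau)=\sigma(\tau)$, which is precisely the integrand of the right-hand side.

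The hard part is justifying this interchange, since $F_Z^{-1}$ is neither bounded nor sign-definite for general $Z\in\Z=L_p$, so Fubini's theorem does not apply directly. I would handle this by writing $F_Z^{-1}=(F_Z^{-1})_+-(F_Z^{-1})_-$ as a difference of its nonnegative parts and applying Tonelli's theorem separately to each piece; for nonnegative integrands the swap of iterated integrals is unconditional. The two resulting expressions then recombine to the claimed identity, the only point to check being that no $\infty-\infty$ ambiguity occurs. Here finiteness of $\rho(Z)$ on $L_p$ together with the fact that every $\mu\in\cP$ has zero mass at $\alpha=1$ controls the singular weight $(1-\alpha)^{-1}$ near $\alpha=1$ and keeps the relevant quantities finite; this is consistent with the explicit inversion already recorded in \rfp{4k}, which exhibits $\sigma=\cT\mu$ as a genuine spectral function in $\cS$.
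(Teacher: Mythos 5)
Your proof is correct and follows essentially the same route as the paper: both reduce the statement to the pointwise identity $\int_0^1 \avr_\alpha(Z)\,d\mu(\alpha)=\int_0^1 (\cT\mu)(\tau)F_Z^{-1}(\tau)\,d\tau$ obtained by substituting \rfp{2k} and interchanging the order of integration, and then pass to suprema using that $\cT$ maps $\cM$ bijectively onto $\Upsilon$. The only difference is that you spell out the Tonelli justification for the interchange (splitting $F_Z^{-1}$ into positive and negative parts), which the paper leaves implicit.
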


\begin{proof}
By using the integral representation (\ref{2k}) of $\avr$ we can write \begin{equation}\label{5k}
\int_0^1 \avr_\alpha (Z)d\mu(\alpha)=\int_0^1\int_\alpha^1 (1-\alpha)^{-1} F_Z^{-1}(\tau)d\tau d\mu(\alpha)=\int_0^1 (\cT\mu)(\tau)F_Z^{-1}(\tau)d\tau.
\end{equation}
Therefore representation (\ref{1k}) can be written as
\begin{equation}\label{6k}
 \rho(Z)=\sup_{\mu\in \cM}\int_0^1 (\cT\mu)(\tau)F_Z^{-1}(\tau)d\tau.
\end{equation}\label{k8}
Together with (\ref{genrepr}) this shows that $\cM$ is a Kusuoka set iff $\cT(\cM)$ is a generating set. \fb
\end{proof}
\\

\begin{definition}
\label{def:expos}
It is said that $\bar{\zeta}$ is a {\rm weak$^*$ exposed point} of $\cA\subset \Z^*$ if there exists $Z\in \Z$ such that  $g_Z(\zeta):=\lan \zeta, Z\ran$ attains its maximum over $\zeta\in \cA$ at the unique point $\bar{\zeta}$. In that case we say that $Z$ {\rm exposes} $\cA$ at $\bar{\zeta}$. We denote by $\ex(\cA)$ the set of exposed points of $\cA$.
\end{definition}

A result going back to  Mazur \cite{maz:33} says that if $X$ is a separable Banach space and  $K$ is a nonempty weakly$^*$ compact subset of $X^*$, then the set of points $x\in X$ which expose $K$ at some point $x^*\in K$ is a dense (in the norm topology) subset of $X$ (see, e.g.,    \cite{LAR:79} for a discussion of these type results). Since the space   $\Z=L_p(\O,\F,P)$, $p\in [1,\infty)$, is {\em separable}  we have the following theorem:

\begin{theorem}
\label{th-dens}
Let $\cA$ be the dual set in (\ref{7k}). Then the set
\begin{equation}\label{expset}
\D:=\{Z\in \Z: Z\;{\rm exposes}\;\cA\;{\rm at\; a \; point}\;\bar{\zeta}\}
\end{equation}
is   a dense (in the norm topology) subset of $\Z$ .
\end{theorem}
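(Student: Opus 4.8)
The plan is simply to invoke the Mazur-type density result quoted immediately before the statement, after confirming that its hypotheses hold in the present setting. I would take $X:=\Z=L_p(\O,\F,P)$ with $p\in[1,\infty)$, so that $X^*=\Z^*$, and take $K:=\cA$, the dual set appearing in the representation (\ref{7k}). The conclusion of Mazur's result is that the collection of points $Z\in X$ that expose $K$ at some point of $K$ is dense, in the norm topology, in $X$; by Definition~\ref{def:expos} this collection is precisely the set $\D$ defined in (\ref{expset}). Hence the theorem follows at once, provided the three hypotheses of the cited result are in place.

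First, $X=\Z$ must be a separable Banach space: this is exactly the separability of $L_p(\O,\F,P)$ for $p\in[1,\infty)$ over the standard probability space, recalled in the sentence preceding the theorem. Second, $K=\cA$ must be a nonempty, weakly$^*$ compact subset of $\Z^*$. Convexity and weak$^*$ compactness are part of the assertion accompanying (\ref{7k}); nonemptiness follows because $\rho$ is a real-valued coherent risk measure, so its Fenchel--Moreau dual representation (\ref{7k}) forces $\cA\neq\emptyset$ (were $\cA$ empty, the supremum would equal $-\infty$). With these facts verified, Mazur's theorem applies verbatim with $X=\Z$ and $K=\cA$.

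There is essentially no genuine obstacle here: the entire weight of the argument rests on the cited Mazur density result, and the remaining task is only to match notation and confirm the hypotheses. The one point deserving a word is the identification of the exposing set with $\D$. A point $Z$ exposes $\cA$ in the sense of Definition~\ref{def:expos} exactly when $g_Z(\zeta):=\lan\zeta,Z\ran$ attains its maximum over $\zeta\in\cA$ at a unique $\bar{\zeta}$, which is precisely the notion of ``exposing $K$ at a point'' used in Mazur's statement. Thus $\D$ coincides with the set to which Mazur's conclusion applies, and its norm-density in $\Z$ is immediate.
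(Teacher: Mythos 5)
Your proof is correct and is exactly the paper's argument: the theorem is stated as an immediate consequence of the cited Mazur density result, using the separability of $L_p(\O,\F,P)$ for $p\in[1,\infty)$ and the nonemptiness, convexity and weak$^*$ compactness of $\cA$ from (\ref{7k}). Your additional verification of the hypotheses is a welcome (if routine) elaboration of what the paper leaves implicit.
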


This allows to proceed towards a minimal representation of a risk measure as follows.

\begin{proposition}
\label{pr-rep}
Let $\ex(\cA)$ be the set of exposed points of $\cA$.
Then    the representation {\rm (\ref{rep1})}
holds with  $\cC:=\ex(\cA)$. Moreover,  if the representation {\rm (\ref{rep1})} holds for some weakly$^*$ closed set $\cC$, then $\ex(\cA)\subset\cC$.
\end{proposition}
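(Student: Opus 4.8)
The plan is to prove the two assertions separately: the representation with $\cC:=\ex(\cA)$ rests on the density Theorem~\ref{th-dens}, while the minimality $\ex(\cA)\subset\cC$ rests on the weak$^*$ compactness of $\cA$. Throughout set $\rho_0(Z):=\sup_{\zeta\in\ex(\cA)}\lan\zeta,Z\ran$; since $\ex(\cA)\subset\cA$ we have $\rho_0\le\rho$ pointwise for free, so the whole content of the first assertion is the reverse inequality $\rho_0\ge\rho$.

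First I would establish $\rho_0=\rho$ on the dense set $\D$ of Theorem~\ref{th-dens}. If $Z\in\D$ exposes $\cA$ at $\bar\zeta$, then by Definition~\ref{def:expos} the map $g_Z(\zeta)=\lan\zeta,Z\ran$ attains its maximum over $\cA$ at the \emph{unique} point $\bar\zeta$, so $\bar\zeta\in\ex(\cA)$ and $\rho(Z)=\lan\bar\zeta,Z\ran\le\rho_0(Z)$; together with $\rho_0\le\rho$ this yields $\rho_0(Z)=\rho(Z)$ for all $Z\in\D$. To pass from $\D$ to all of $\Z$ I would use that $\cA$, being weakly$^*$ compact, is bounded in $\Z^*=L_q$, say $\|\zeta\|_q\le M$ for all $\zeta\in\cA$. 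Since $\ex(\cA)\subset\cA$, the estimate $\lan\zeta,Z\ran\le\lan\zeta,Z'\ran+M\|Z-Z'\|_p$ combined with $\lan\zeta,Z'\ran\le\rho_0(Z')$ gives, after taking the supremum over $\zeta\in\ex(\cA)$, that $\rho_0$ is $M$-Lipschitz and hence continuous. As $\rho$ is continuous as well and the two agree on the dense set $\D$, they agree on all of $\Z$, which is precisely representation~(\ref{rep1}) with $\cC=\ex(\cA)$.

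For the second assertion, suppose (\ref{rep1}) holds for some weakly$^*$ closed $\cC$. By the observation made immediately after (\ref{rep1}) we have $\cC\subset\cA$, so $\cC$ is a weakly$^*$ closed subset of the weakly$^*$ compact set $\cA$ and is therefore itself weakly$^*$ compact. Fix $\bar\zeta\in\ex(\cA)$ and let $Z\in\Z$ expose $\cA$ at $\bar\zeta$. Because $\zeta\mapsto\lan\zeta,Z\ran$ is weakly$^*$ continuous, its supremum over the compact set $\cC$ is attained at some $\zeta^*\in\cC\subset\cA$, and this value equals $\rho(Z)=\max_{\zeta\in\cA}\lan\zeta,Z\ran$. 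By the uniqueness clause in Definition~\ref{def:expos}, the only maximizer of $\lan\cdot,Z\ran$ over $\cA$ is $\bar\zeta$; hence $\zeta^*=\bar\zeta$ and therefore $\bar\zeta\in\cC$. Since $\bar\zeta\in\ex(\cA)$ was arbitrary, $\ex(\cA)\subset\cC$.

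The step I expect to demand the most care is the transition from the dense set $\D$ to all of $\Z$: one must certify that $\rho_0$ is genuinely continuous, not merely lower semicontinuous as a supremum of continuous linear functionals. The boundedness of $\cA$ inherited from its weak$^*$ compactness is exactly what upgrades lower semicontinuity to the Lipschitz estimate and closes the argument; the remainder is a routine combination of the exposedness/uniqueness structure with the compactness of $\cC$.
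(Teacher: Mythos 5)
Your proposal is correct and follows essentially the same route as the paper: the first assertion is obtained from Theorem~\ref{th-dens} together with the uniform bound on $\|\zeta\|^*$ over the weakly$^*$ compact set $\cA$ (your Lipschitz estimate for $\rho_0$ is exactly the inequality $|\rho(Z_n)-\lan\zeta_n,Z\ran|\le\|\zeta_n\|^*\|Z_n-Z\|$ that the paper uses along a sequence $Z_n\to Z$), and the second assertion is the same compactness-plus-uniqueness argument. No gaps.
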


\begin{proof}
Consider the set $\D$ defined in (\ref{expset}). By Theorem \ref{th-dens} this set is dense in $\Z$. So
for $Z\in \Z$ fixed, let $\{Z_n\}\subset \D$ be a sequence of points converging (in the norm topology) to $Z$. Let $\{\zeta_n\}\subset \ex(\cA)$ be a  sequence of the corresponding maximizers, i.e.,
$\rho(Z_n)=\lan \zeta_n,Z_n\ran$. Since $\cA$ is bounded,  we have that   $\|\zeta_n\|^*$ is uniformly bounded.
Since $\rho:\Z\to\bbr$ is real valued  it is continuous (cf.~\cite{RS:2006}), and thus  $\rho(Z_n)\to  \rho(Z)$. We also have that
\[
|\rho(Z_n)-\lan \zeta_n,Z\ran|=|\lan \zeta_n,Z_n\ran-\lan \zeta_n,Z\ran |\le \|\zeta_n\|^* \|Z_n-Z\|\to 0.
\]
It follows that
\[
\rho(Z)=\sup\{\lan \zeta_n,Z\ran: n=1,...\},
\]
and hence the representation  (\ref{rep1})
holds with  $\cC=\ex(\cA)$.

Let $\cC$ be a  weakly$^*$ closed set such that the representation  (\ref{rep1}) holds. It follows that $\cC$ is a subset of $\cA$ and is weakly$^*$ compact. Consider a point  $\zeta\in \ex(\cA)$. By the  definition of the set $\ex(\cA)$, there is $Z\in \D$ such that $\rho(Z)=\lan \zeta,Z\ran$. Since $\cC$ is weakly$^*$ compact, the maximum in (\ref{rep1}) is attained and hence
$\rho(Z)=\lan \zeta',Z\ran$ for some $\zeta'\in \cC$. By the uniqueness of the maximizer $\zeta$  it follows that $\zeta'=\zeta$. This shows that $\ex(\cA)\subset \cC$.
\fb
\end{proof}
\\

By the above proposition, the weak$^*$ closure   $\overline{\ex(\cA)}$ coincides with the intersection of all weakly$^*$ closed sets $\cC$ satisfying representation (\ref{rep1}).

\paragraph{Note:} For a set $A\subset \Z^*$ we denote by $\overline{A}$ the topological closure of $A$ in the weak$^*$ topology of the space $\Z^*$.

\begin{proposition}
 \label{pr:inv}
 The sets $\ex(\cA)$ and $\overline{\ex(\cA)}$ are invariant under the group $\cF$ of measure preserving transformations.
\end{proposition}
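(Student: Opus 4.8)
The plan is to show invariance of $\ex(\cA)$ under $\cF$ directly from the definition of an exposed point, and then deduce invariance of the closure $\overline{\ex(\cA)}$ as a formal consequence. The key structural facts I would use are two: first, that the dual set $\cA$ is itself invariant under $\cF$ (stated in the excerpt after the dual representation (\ref{7k})); and second, that each $T\in\cF$, being measure-preserving, induces an isometry $Z\mapsto Z\circ T$ of $\Z=L_p(\O,\F,P)$ that preserves the bilinear pairing in the sense that $\lan \zeta\circ T, Z\circ T\ran = \lan \zeta, Z\ran$ for all $\zeta\in\Z^*$ and $Z\in\Z$. This last identity is just a change of variables under the measure-preserving map and is the computational engine of the argument.

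First I would fix $\bar\zeta\in\ex(\cA)$ and $T\in\cF$, and let $Z\in\Z$ be a point that exposes $\cA$ at $\bar\zeta$, so that $g_Z(\zeta)=\lan\zeta,Z\ran$ attains its maximum over $\cA$ uniquely at $\bar\zeta$. The claim is that $\bar\zeta\circ T$ is exposed by $Z\circ T^{-1}$. To verify this, take any $\zeta\in\cA$; since $\cA$ is $\cF$-invariant, $\zeta\circ T^{-1}\in\cA$ as well, and the pairing identity gives $\lan\zeta, Z\circ T^{-1}\ran = \lan\zeta\circ T, (Z\circ T^{-1})\circ T\ran=\lan\zeta\circ T,Z\ran$. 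Wait---I must be careful about which direction the substitution runs; the clean way is to write $\lan \zeta, Z\circ T^{-1}\ran=\lan\zeta\circ T,Z\ran$ using the measure-preserving change of variables, valid for every $\zeta\in\cA$. Maximizing the left-hand side over $\zeta\in\cA$ is therefore the same as maximizing $\lan\eta,Z\ran$ over $\eta=\zeta\circ T$ ranging over $\cA$ (again using invariance, so that $\eta$ sweeps out all of $\cA$ as $\zeta$ does). Since $Z$ exposes $\cA$ uniquely at $\bar\zeta$, this maximum is attained only when $\zeta\circ T=\bar\zeta$, i.e.\ only when $\zeta=\bar\zeta\circ T^{-1}$. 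Hence $Z\circ T^{-1}$ exposes $\cA$ at the unique point $\bar\zeta\circ T^{-1}$, which shows $\bar\zeta\circ T^{-1}\in\ex(\cA)$. Running the argument with $T^{-1}$ in place of $T$ (legitimate since $\cF$ is a group) yields $\bar\zeta\circ T\in\ex(\cA)$, establishing $\cF$-invariance of $\ex(\cA)$.

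For the closure, I would argue that invariance passes to the weak$^*$ closure because each transformation $\zeta\mapsto\zeta\circ T$ is a weak$^*$-to-weak$^*$ homeomorphism of $\Z^*$. Indeed, for fixed $Z\in\Z$ the functional $\zeta\mapsto\lan\zeta\circ T,Z\ran=\lan\zeta,Z\circ T^{-1}\ran$ is weak$^*$ continuous in $\zeta$, so $\zeta\mapsto\zeta\circ T$ is continuous for the weak$^*$ topology, and its inverse $\zeta\mapsto\zeta\circ T^{-1}$ is continuous for the same reason; thus it is a homeomorphism. A homeomorphism maps the closure of a set onto the closure of its image, so from $(\ex(\cA))\circ T=\ex(\cA)$ we obtain $\overline{\ex(\cA)}\circ T=\overline{\ex(\cA)}$ for every $T\in\cF$.

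The main obstacle is purely bookkeeping: getting the direction of composition and the placement of $T$ versus $T^{-1}$ exactly right in the pairing identity, so that the supremum over $\cA$ genuinely transports to a supremum over $\cA$ and the uniqueness of the exposing point is correctly inherited. Once the substitution $\lan\zeta,Z\circ T^{-1}\ran=\lan\zeta\circ T,Z\ran$ and the $\cF$-invariance of $\cA$ are in hand, there is no analytic difficulty---the uniqueness of the maximizer transfers immediately, and the closure statement is a soft topological consequence.
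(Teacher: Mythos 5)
Your proof is correct and follows essentially the same route as the paper: the change-of-variables identity $\lan\zeta\circ T,Z\ran=\lan\zeta,Z\circ T^{-1}\ran$ together with the $\cF$-invariance of $\cA$ transfers the unique maximizer, and weak$^*$ continuity of $\zeta\mapsto\zeta\circ T$ handles the closure. The only cosmetic difference is that the paper checks the closure step with weak$^*$ convergent sequences while you invoke the homeomorphism property of $\zeta\mapsto\zeta\circ T$; both rest on the same continuity fact, and your bookkeeping with $T$ versus $T^{-1}$ resolves correctly.
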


 \begin{proof}
  For $T\in \cF$
we have that
  \[
 \lan \zeta\circ T,Z\ran=
  \int_0^1 \zeta (T(\tau)) Z(\tau)d\tau=
  \int_0^1 \zeta (\tau) Z(T^{-1}(\tau))d\tau = \lan \zeta,Z\circ T^{-1}\ran.
  \]
Thus     $\bar{\zeta}\in \cA$ is a maximizer of $\lan \zeta,Z\ran$ over $\zeta\in \cA$, iff  $\bar{\zeta}\circ T$ is a maximizer of $\lan \zeta,T^{-1}\circ Z\ran$ over $\zeta\in \cA$.
Consequently we have that  if $\bar{\zeta}\in \ex(\cA)$, then $\bar{\zeta}\circ T\in \ex(\cA)$, i.e., $\ex(\cA)$ is invariant under $\cF$.

Now consider a point  $\bar{\zeta}\in \overline{\ex(\cA)}$. Then there is  a sequence  $\zeta_n\in \ex(\cA)$ such that
$\lan \zeta_n,Z\ran$ converges to $\lan\bar{\zeta},Z\ran$ for any $Z\in \Z$. For any $T\in \cF$ we have that $\zeta_n\circ T \in \ex(\cA)$, and hence
$
\lan \zeta_n\circ T,Z\ran = \lan \zeta_n,Z\circ T^{-1}\ran
$
converges to $\lan\bar{\zeta},Z\circ T^{-1}\ran=\lan\bar{\zeta}\circ T,Z\ran$ for any $Z\in \Z$. It follows that $\bar{\zeta}\circ T\in \overline{\ex(\cA)}$. \fb
\end{proof}
\\

By the above proposition we have that for any exposed point there is a distributional equivalent which is a monotonically nondecreasing function. We employ this observation in the following Corollary~\ref{cor-1} to reduce the generating set.

\begin{definition}
\label{def:monexp}
Denote by $\cU$ the subset of $\ex(\cA)$  formed by  right side continuous monotonically nondecreasing functions, i.e., $\cU=\cS\cap \ex(\cA)$.
\end{definition}

It follows by Proposition \ref{pr-rep} that the topological closure   $\bar{\cU}$ (of the set $\cU$)  is the  unique minimal generating set in the following sense.

\begin{cor}
\label{cor-1}
The set $\cU$ is a generating set. Moreover, if
  $\Upsilon$ is a weakly$^*$  closed generating set, then the set  $\bar{\cU}$ is a subset of $\Upsilon$, i.e., $\bar{\cU}$ coincides with the intersection of all weakly$^*$  closed generating sets.
\end{cor}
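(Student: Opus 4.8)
The plan is to prove the two assertions separately, using Propositions \ref{pr-rep} and \ref{pr:inv} together with the monotone rearrangement (Hardy--Littlewood) inequality $\langle\zeta,Z\rangle=\int_0^1\zeta(\tau)Z(\tau)d\tau\le\int_0^1 F_\zeta^{-1}(\tau)F_Z^{-1}(\tau)d\tau$. First, to show that $\cU$ is a generating set, I would start from $\rho(Z)=\sup_{\zeta\in\ex(\cA)}\langle\zeta,Z\rangle$, supplied by Proposition \ref{pr-rep}. Given $\zeta\in\ex(\cA)$, its nondecreasing right-continuous rearrangement $F_\zeta^{-1}$ is a spectral function, since it is nonnegative, monotone, right-side continuous, and $\int_0^1 F_\zeta^{-1}=\int_0^1\zeta=1$ ($\zeta$ being a density); moreover $F_\zeta^{-1}=\zeta\circ T$ for some $T\in\cF$ by Remark \ref{rem-de}, so the $\cF$-invariance of $\ex(\cA)$ (Proposition \ref{pr:inv}) gives $F_\zeta^{-1}\in\cS\cap\ex(\cA)=\cU$. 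The rearrangement inequality then yields $\langle\zeta,Z\rangle\le\int_0^1 F_\zeta^{-1}(\tau)F_Z^{-1}(\tau)d\tau$, whence $\rho(Z)\le\sup_{\sigma\in\cU}\int_0^1\sigma(\tau)F_Z^{-1}(\tau)d\tau$. For the reverse inequality, any $\sigma\in\cU\subset\cA$ satisfies $\int_0^1\sigma(\tau)F_Z^{-1}(\tau)d\tau=\langle\sigma,F_Z^{-1}\rangle\le\rho(F_Z^{-1})=\rho(Z)$, using $F_Z^{-1}\sim Z$ and law invariance. Hence equality holds and $\cU$ satisfies (\ref{genrepr}).

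For minimality, let $\Upsilon\subset\cS$ be a weakly$^*$ closed generating set; since $\OO(\Upsilon)\subset\cA$ and $\cA$ is weakly$^*$ compact, $\Upsilon$ is weakly$^*$ compact. I would fix $\sigma\in\cU$ and let $Z\in\D$ expose $\cA$ at $\sigma$. The key step is that the monotone rearrangement $\widetilde Z:=F_Z^{-1}$ still exposes $\cA$ at $\sigma$. Indeed, $\rho(\widetilde Z)=\rho(Z)=\langle\sigma,Z\rangle\le\langle\sigma,\widetilde Z\rangle\le\rho(\widetilde Z)$ (the middle inequality by Hardy--Littlewood, since $\sigma$ is monotone), so $\langle\sigma,\widetilde Z\rangle=\rho(\widetilde Z)$. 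Writing $\widetilde Z=Z\circ T^{-1}$ and using the identity $\langle\eta\circ T,Z\rangle=\langle\eta,Z\circ T^{-1}\rangle$ from the proof of Proposition \ref{pr:inv}, any maximizer $\zeta\in\cA$ of $\langle\cdot,\widetilde Z\rangle$ obeys $\langle\zeta\circ T,Z\rangle=\langle\zeta,\widetilde Z\rangle=\rho(Z)$, so $\zeta\circ T=\sigma$ by uniqueness of the exposed maximizer; applying the same reasoning to $\sigma$ itself gives $\sigma\circ T=\sigma$, and these two facts force $\zeta=\sigma$.

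Because $\widetilde Z$ is nondecreasing, for every $\sigma'\circ T\in\OO(\Upsilon)$ the rearrangement inequality gives $\langle\sigma'\circ T,\widetilde Z\rangle=\langle\sigma',\widetilde Z\circ T^{-1}\rangle\le\langle\sigma',\widetilde Z\rangle$, so $\sup_{\OO(\Upsilon)}\langle\cdot,\widetilde Z\rangle=\sup_{\sigma'\in\Upsilon}\langle\sigma',\widetilde Z\rangle$; since $\Upsilon$ is generating this common value equals $\rho(\widetilde Z)$, and by weak$^*$ compactness of $\Upsilon$ it is attained at some $\sigma^*\in\Upsilon\subset\cA$. As $\widetilde Z$ exposes $\cA$ uniquely at $\sigma$, we obtain $\sigma^*=\sigma$, hence $\sigma\in\Upsilon$. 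Thus $\cU\subset\Upsilon$, and since $\Upsilon$ is closed, $\bar\cU\subset\Upsilon$. To conclude that $\bar\cU$ coincides with the intersection of all weakly$^*$ closed generating sets, I would observe that $\bar\cU$ is itself one of them: it is closed by definition, and it is generating because the map $\sigma\mapsto\langle\sigma,F_Z^{-1}\rangle$ is weakly$^*$ continuous (as $F_Z^{-1}\in\Z$), so the supremum over $\cU$ equals the supremum over $\bar\cU$.

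The main obstacle is the middle step: showing that passing to the monotone rearrangement $\widetilde Z$ preserves the uniqueness of the exposing maximizer. This is precisely what lets the argument land inside $\Upsilon$ itself rather than merely inside the larger orbit-closure $\overline{\OO(\Upsilon)}$. Applying Proposition \ref{pr-rep} with $\cC=\overline{\OO(\Upsilon)}$ would only give $\ex(\cA)\subset\overline{\OO(\Upsilon)}$, and extracting from this the membership $\sigma\in\Upsilon$ for a monotone $\sigma$ would require controlling the monotone rearrangements of a weak$^*$-convergent net in $\OO(\Upsilon)$, a convergence that need not hold; exposing by the monotone $\widetilde Z$ circumvents this gap entirely.
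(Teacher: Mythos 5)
Your proof is correct, and its overall skeleton (exposed points via Proposition \ref{pr-rep}, $\cF$-invariance via Proposition \ref{pr:inv}, and the Hardy--Littlewood rearrangement inequality) is exactly the route the paper intends; the paper itself offers no written proof beyond the remark that the corollary ``follows by Proposition \ref{pr-rep}.'' The genuinely valuable addition in your argument is the middle step for minimality: Proposition \ref{pr-rep} applied to a weakly$^*$ closed generating set $\Upsilon$ only yields $\ex(\cA)\subset\overline{\OO(\Upsilon)}$, and passing from ``the monotone $\sigma$ lies in the weak$^*$ closure of the orbit of $\Upsilon$'' to ``$\sigma\in\Upsilon$'' is not automatic, since weak$^*$ convergence of $\sigma_n\circ T_n$ does not obviously control the $\sigma_n$ themselves. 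Your device of replacing the exposing element $Z$ by its monotone rearrangement $\widetilde Z=F_Z^{-1}$ --- and verifying, via the chain $\rho(\widetilde Z)=\rho(Z)=\langle\sigma,Z\rangle\le\langle\sigma,\widetilde Z\rangle\le\rho(\widetilde Z)$ together with the transfer identity $\langle\zeta\circ T,Z\rangle=\langle\zeta,Z\circ T^{-1}\rangle$, that $\widetilde Z$ still exposes $\cA$ uniquely at $\sigma$ --- closes this gap cleanly: the comonotonicity of $\widetilde Z$ with every element of $\Upsilon$ collapses the supremum over $\OO(\Upsilon)$ to a supremum over the compact $\Upsilon$, which is attained and hence forced to equal $\sigma$. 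The only cosmetic caveat is the usual one the paper also glosses over: elements of $\bar{\cU}$ are equivalence classes, so one should say each has a nondecreasing right-continuous version in $\cS$ (which follows by testing weak$^*$ limits against indicators of ordered intervals). Aside from that, the argument is complete.
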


 As before, let $(\O,\F,P)$ be the standard probability space and consider   the space  $\Z:=L_p(\O,\F,P)$, $p\in [1,\infty)$,   its dual  $\Z^*=L_q(\O,\F,P)$, and the set
 \begin{equation}\label{setp}
 \cP_q:=\{\mu\in \cP:\cT\mu\in \Z^*\}.
\end{equation}

\begin{proposition}
\label{pr-con}
The set $\cP_q$ is closed and the
mapping $\cT$ is continuous on the   set  $\cP_q$ with respect to   the  weak topology of measures and the  weak$^*$ topology of $\Z^*$.
\end{proposition}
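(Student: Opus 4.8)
The plan is to establish the two assertions separately, first the continuity of $\cT$ on $\cP_q$ and then the closedness of $\cP_q$ inside $\cP$; both rest on understanding the explicit formula \rfp{3k} together with the fact that $\cP$ is already compact (hence sequentially compact) in the weak topology of measures. Throughout I would work with sequences, since $\cP$ is metrizable in the weak topology.

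For continuity, suppose $\mu_n\to\mu$ weakly with $\mu_n,\mu\in\cP_q$. I want to show $\cT\mu_n\to\cT\mu$ in the weak$^*$ topology of $\Z^*=L_q$, i.e.\ that $\int_0^1 (\cT\mu_n)(\tau)Z(\tau)\,d\tau\to\int_0^1(\cT\mu)(\tau)Z(\tau)\,d\tau$ for every $Z\in\Z=L_p$. Here the key is to rewrite this pairing back in terms of the measure $\mu_n$ itself, reversing the Fubini computation of \rfp{5k}: since $(\cT\mu)(\tau)=\int_0^\tau(1-\alpha)^{-1}d\mu(\alpha)$, one has
\begin{equation}
\int_0^1 (\cT\mu)(\tau)Z(\tau)\,d\tau=\int_0^1 h_Z(\alpha)\,d\mu(\alpha),\quad h_Z(\alpha):=(1-\alpha)^{-1}\int_\alpha^1 Z(\tau)\,d\tau.
\end{equation}
Thus the pairing is exactly the integral of the fixed function $h_Z$ against $\mu$. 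If $h_Z$ were bounded and continuous on $[0,1]$, weak convergence $\mu_n\to\mu$ would give the result immediately. The function $h_Z$ is continuous on $[0,1)$; the obstacle is the factor $(1-\alpha)^{-1}$, which blows up as $\alpha\to 1$. I expect this to be the main difficulty, and the condition $\cT\mu\in\Z^*$ is precisely what is needed to control it. The idea is that $\cT\mu\in L_q$ forces $\mu$ to place negligible mass near $1$, so that the behavior of $h_Z$ near the singularity does not spoil convergence; more carefully, one uses that $\int_\alpha^1 Z(\tau)d\tau=o((1-\alpha)^{1/p})$ for $Z\in L_p$ by H\"older, so $h_Z(\alpha)=o((1-\alpha)^{1/p-1})$, and one estimates the contribution of a neighborhood of $1$ uniformly in $n$ using the uniform tightness of the family at the singularity together with the $L_q$-bound on $\cT\mu_n$.

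For closedness of $\cP_q$, suppose $\mu_n\in\cP_q$ with $\mu_n\to\mu$ weakly; since $\cP$ is compact, $\mu\in\cP$, and I must show $\cT\mu\in\Z^*=L_q$. Here I would use the monotone structure: $\cT\mu_n\to\cT\mu$ pointwise (at continuity points of $\mu$) by \rfp{3k}, and each $\cT\mu_n$ is a nonnegative nondecreasing function. The natural tool is a Fatou or lower-semicontinuity argument to bound $\|\cT\mu\|_q$ by $\liminf\|\cT\mu_n\|_q$; the issue is that this liminf may be infinite, so one cannot conclude closedness from an arbitrary sequence alone. I suspect the correct reading is that closedness should be argued via the complement: a measure $\mu$ fails to lie in $\cP_q$ exactly when $\|\cT\mu\|_q=\infty$, and this ``infinite norm'' condition is an open condition in the weak topology because $\|\cT\mu\|_q=\sup_k \|\min(\cT\mu,k)\|_q$ is a supremum of weakly continuous functionals (each truncation being bounded), hence weakly lower semicontinuous; therefore $\{\mu:\|\cT\mu\|_q\le c\}$ is closed for each finite $c$, and $\cP_q=\bigcup_c\{\|\cT\mu\|_q\le c\}$ requires a little more care. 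The cleanest route will be to show directly that the pointwise limit $\cT\mu$, being dominated in an appropriate averaged sense by the convergent $L_q$-functions $\cT\mu_n$ whenever the latter stay norm-bounded, inherits membership in $L_q$; I would combine this with the explicit inverse formula \rfp{4k} to keep the estimates concrete. The delicate point throughout remains the singularity at $\alpha=1$, and I would handle it by splitting every integral at a level $1-\delta$ and sending $\delta\to 0$ after $n\to\infty$.
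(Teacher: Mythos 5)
You have correctly located the crux of both assertions: everything hinges on controlling the factor $(1-\alpha)^{-1}$ near $\alpha=1$, and your reduction of the pairing to $\int_0^1 h_Z(\alpha)\,d\mu(\alpha)$ with $h_Z(\alpha)=(1-\alpha)^{-1}\int_\alpha^1 Z(t)\,dt$ is exactly the computation in the paper's proof. The paper then tests only against the dense set $\V$ of functions $Z$ vanishing on a neighborhood of $1$, for which $h_Z$ is bounded and continuous, so that weak convergence of $\mu_k$ immediately gives $\langle \cT\mu_k,Z\rangle\to\langle \cT\mu,Z\rangle$ for $Z\in\V$; this matches your first step. The gap --- in your proposal and, as far as I can see, in the paper's own argument, which stops at $\V$ and asserts the conclusion --- is the passage from $\V$ to all of $\Z$ together with the claim $\cT\mu\in\Z^*$. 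Both of your routes invoke at the critical moment a \emph{uniform} bound on $\|\cT\mu_k\|_q$ (``the $L_q$-bound on $\cT\mu_n$'' for continuity; domination ``whenever the latter stay norm-bounded'' for closedness), but no such bound follows from the hypothesis that each $\mu_k$ individually lies in $\cP_q$, and without it the step fails. Indeed the statement as literally formulated does not hold. For closedness: take $\mu=\sum_k c_k\delta_{1-2^{-k}}$ with $c_k$ proportional to $k^{-2}$; then $\mu\in\cP$, but $(\cT\mu)(\tau)\ge c_k2^k$ for $\tau\ge 1-2^{-k}$, so $\int_0^1(\cT\mu)^q\ge \tfrac12\sum_k c_k^q\,2^{k(q-1)}=\infty$ and $\cT\mu\notin L_q$ for any $q>1$, while the normalized truncations to the first $n$ atoms lie in $\cP_q$ and converge weakly to $\mu$. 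For continuity: $\mu_k=(1-k^{-1})\delta_0+k^{-1}\delta_{1-e^{-k}}$ converges weakly to $\delta_0$ within $\cP_q$, yet for the fixed $Z(t)=(1-t)^{-1/p}\bigl(\log\tfrac{e}{1-t}\bigr)^{-2/p}\in L_p$ one has $\langle\cT\mu_k,Z\rangle\ge \tfrac{e^k}{k}\int_{1-e^{-k}}^1Z(t)\,dt\to\infty$, so $\cT\mu_k\not\to\cT\delta_0$ in the weak$^*$ sense.

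So neither route can be completed as written, and your own hesitation about $\cP_q=\bigcup_c\{\|\cT\mu\|_q\le c\}$ being a union of closed sets was well founded: that union is in fact not closed. What does work, and what the subsequent applications actually need, is the norm-bounded version: on a set of the form $\{\mu\in\cP:\|\cT\mu\|_q\le C\}$ (in particular on $\cT^{-1}(\cC)$ for $\cC$ contained in the weakly$^*$ compact, hence norm-bounded, dual set $\cA$), your argument closes. Given $\sup_k\|\cT\mu_k\|_q\le C$, Banach--Alaoglu and separability of $\Z$ yield a weak$^*$ cluster point $\eta$ of $\{\cT\mu_k\}$ with $\|\eta\|_q\le C$; testing against your dense set $\V$ and using Tonelli identifies $\eta$ with $\cT\mu$ on every interval $[0,\gamma]$, $\gamma<1$, whence $\cT\mu=\eta\in L_q$ and the full sequence converges weak$^*$ by uniqueness of the cluster point. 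I would recommend you restate the proposition in this bounded form and carry out the Alaoglu argument; the H\"older estimate $h_Z(\alpha)=o\bigl((1-\alpha)^{1/p-1}\bigr)$ that you derived then supplies exactly the quantitative control near the singularity that you were reaching for.
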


 \begin{proof}
 Let $\{\mu_k\}\subset \cP_q$ be a sequence of measures converging in the weak topology to $\mu\in \cP$, and define $\zeta_k:=\cT\mu_k$ and $\zeta :=\cT\mu$. For $Z\in \Z$
 we have that
 \[
 \int_0^1Z(t) \zeta_k(t) dt= \int_0^1\int_0^tZ(t)(1-\alpha)^{-1}d\mu_k(\alpha)dt=
  \int_0^1(1-\alpha)^{-1}h_Z(\alpha)d\mu_k(\alpha),
 \]
where $h_Z(\alpha):= \int_\alpha^1 Z(t)dt$. Note that
$|h_Z(\alpha)|\le \|Z\|_q$ for $\alpha\in[0,1]$, and
the function $\alpha\mapsto h_Z(\alpha)$ is continuous.
Let us choose a dense set $\V\subset \Z$ such that for any $Z\in \V$ the corresponding
 function $(1-\alpha)^{-1}h_Z(\alpha)$ is bounded on [0,1].
For example, we can take functions $Z\in \Z$ such that $Z(t)=0$ for $t\in [\gamma,1]$ with $\gamma\in (0,1)$.
  Then for any $Z\in \V$ we have that
 \[
\lim_{k\to\infty} \int_0^1Z(t) \zeta_k(t) dt=
\lim_{k\to\infty} \int_0^1(1-\alpha)^{-1}h_Z(\alpha)d\mu_k(\alpha)=  \int_0^1(1-\alpha)^{-1}h_Z(\alpha)d\mu (\alpha),
 \]
 and hence
  \begin{equation}\label{der1}
\lim_{k\to\infty} \int_0^1Z(t) \zeta_k(t) dt=\int_0^1Z(t) \zeta (t) dt,
 \end{equation}
with the integral in the right hand side of (\ref{der1}) being finite. This shows that $\cT$ is continuous at $\mu$ and
$\zeta\in \Z^*$, and hence $\mu\in \cP_q$.  \fb
 \end{proof}
  \\

 It follows that if $\cC\subset \cA$ is a weakly$^*$ closed  set of right side continuous monotonically nondecreasing  functions, then $\cT^{-1}(\cC)$ is a closed subset of $\cP$.

 \begin{definition}
 \label{defminim}
 We say that a Kusuoka set $\cM\subset\cP$ is {\rm minimal} if $\cM$ is closed and for any closed Kusuoka set $\cM'$ it holds that $\cM\subset \cM'$. We say that a generating set $\Upsilon$ is {\rm minimal} if $\Upsilon$ is weakly$^*$ closed and for any weakly$^*$ closed generating set $\Upsilon'$ it holds that $\Upsilon \subset \Upsilon'$.
 \end{definition}

 Note that it follows from the above definition that {\em if   minimal Kusuoka set exists, then it is unique.}

 By Propositions \ref{pr-gen} and  \ref{pr-con} we have that a Kusuoka set $\cM$ is minimal iff the set $\cT(\cM)$ is a minimal generating set. Together with Theorem  \ref{th-dens} and  Proposition \ref{pr-rep} this   implies  the following result.

\begin{theorem}
\label{th-uniq}
The set $\cM:=\cT^{-1}(\cU)$ is a Kusuoka set and its closure $\overline\cM=\overline{\cT^{-1}(\cU)}=\cT^{-1}(\bar\cU)$   is the  minimal Kusuoka set.
\end{theorem}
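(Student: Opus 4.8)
The plan is to exploit the bijection $\cT:\cP\to\cS$ together with the already-established dictionary between Kusuoka sets and generating sets, so that every assertion about $\cM=\cT^{-1}(\cU)$ is transported from the corresponding assertion about $\cU$ proved in Corollary \ref{cor-1}. The three things to establish are: $\cM$ is a Kusuoka set; the closure identity $\overline{\cT^{-1}(\cU)}=\cT^{-1}(\bar\cU)$; and minimality of $\overline{\cM}$.

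First I would dispose of the claim that $\cM$ is a Kusuoka set. Since $\cT$ is a bijection and $\cU\subset\cS$, we have $\cT(\cM)=\cT(\cT^{-1}(\cU))=\cU$. By Corollary \ref{cor-1} the set $\cU$ is a generating set, so Proposition \ref{pr-gen} yields at once that $\cM$ is a Kusuoka set. This step is routine.

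The crux is the topological identity $\overline{\cT^{-1}(\cU)}=\cT^{-1}(\bar\cU)$, where the left closure is in the weak topology of $\cP$ and $\bar\cU$ is the weak$^*$ closure in $\Z^*$. I would first confine everything to $\cP_q$: since $\cU\subset\ex(\cA)\subset\cA\subset\Z^*$ and $\cA$ is weak$^*$ closed, we get $\bar\cU\subset\cA$, so both $\cT^{-1}(\cU)$ and $\cT^{-1}(\bar\cU)$ lie in $\cP_q$. By Proposition \ref{pr-con} the set $\cP_q$ is closed, hence compact (as $\cP$ is compact by Prohorov), and $\cT$ is continuous on it. Because the weak$^*$ topology is Hausdorff, a continuous bijection from the compact space $\cP_q$ onto its image is a homeomorphism; in particular $\cT(\cP_q)$ is weak$^*$ compact and therefore weak$^*$ closed, so $\bar\cU\subset\cT(\cP_q)$ and the weak$^*$ closure of $\cU$ agrees with its closure taken relative to $\cT(\cP_q)$. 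Transporting closures through the homeomorphism $\cT|_{\cP_q}$ gives $\cT(\overline{\cT^{-1}(\cU)})=\bar\cU$, i.e.\ $\cT^{-1}(\bar\cU)=\overline{\cT^{-1}(\cU)}$, where I use that $\cP_q$ being closed makes the relative closure in $\cP_q$ coincide with the closure in $\cP$. Separability of $\Z$ makes both topologies metrizable on the bounded sets in play, so one may argue with sequences throughout. I expect this homeomorphism-and-closure bookkeeping to be the main obstacle, precisely because $\cT$ is a priori continuous in only one direction and one must restrict to $\cP_q$ to invert it cleanly.

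Finally I would conclude minimality. By Corollary \ref{cor-1} the set $\bar\cU$ is the minimal generating set; being in particular a generating set, Proposition \ref{pr-gen} shows that $\cT^{-1}(\bar\cU)=\overline{\cM}$ is a Kusuoka set, and it is closed as a closure. The equivalence recorded after Definition \ref{defminim}---that a Kusuoka set $\cM$ is minimal iff $\cT(\cM)$ is a minimal generating set---applied to $\overline{\cM}$, for which $\cT(\overline{\cM})=\bar\cU$, then shows that $\overline{\cM}$ is the minimal Kusuoka set, uniqueness being automatic from the note following Definition \ref{defminim}.
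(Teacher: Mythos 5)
Your argument is correct and follows essentially the same route as the paper, which derives the theorem in one line from Propositions \ref{pr-gen}, \ref{pr-con}, \ref{pr-rep} and Corollary \ref{cor-1}; you merely supply the topological bookkeeping the paper leaves implicit, namely that compactness of $\cP_q$ makes $\cT|_{\cP_q}$ a homeomorphism onto a weak$^*$ closed image, which justifies $\overline{\cT^{-1}(\cU)}=\cT^{-1}(\bar{\cU})$ and the passage between closed Kusuoka sets and weak$^*$ closed generating sets.
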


This shows that the minimal Kusuoka set does exist, and as it was mentioned before, by the definition is unique.
 In particular we obtain the following result (cf.~\cite{mor12}).

\begin{cor}
\label{cor:spec}
Let $\rho_\sigma:\Z\to\bbr$ be a spectral measure with spectral function $\sigma\in \Z^*\cap\cS$, i.e.,
\begin{equation}\label{spectr}
\rho_\sigma(Z)=\int_0^1 \sigma(t)F^{-1}_Z(t)dt
\end{equation}
 Then its  minimal Kusuoka set is   given by  the singleton  $\{\cT^{-1}\sigma\}$.
\end{cor}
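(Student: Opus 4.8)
The plan is to reduce the statement to the abstract minimality result already established, namely Theorem \ref{th-uniq} together with Corollary \ref{cor-1}, so that essentially no new computation is required. The starting observation is that for a spectral risk measure the singleton $\{\sigma\}$ is itself a generating set: since $\sigma\in\cS$ and $\rho_\sigma(Z)=\int_0^1\sigma(t)F_Z^{-1}(t)dt$, the representation (\ref{genrepr}) holds trivially with $\Upsilon:=\{\sigma\}$, the supremum being taken over a single function. Moreover $\{\sigma\}$ is weakly$^*$ closed, being a singleton in the Hausdorff weak$^*$ topology $\sigma(\Z^*,\Z)$ of $\Z^*$, and it lies in $\cS\cap\Z^*$ by the hypothesis $\sigma\in\Z^*\cap\cS$.

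First I would invoke Corollary \ref{cor-1}: the closure $\bar\cU$ of $\cU=\cS\cap\ex(\cA)$ is contained in every weakly$^*$ closed generating set. Applying this to $\Upsilon=\{\sigma\}$ yields $\bar\cU\subset\{\sigma\}$. On the other hand, Corollary \ref{cor-1} also asserts that $\cU$ is itself a generating set; since the representation (\ref{genrepr}) cannot hold with an empty index set (the supremum would be $-\infty$, not the finite value $\rho_\sigma(Z)$), the set $\cU$ is nonempty, and therefore so is $\bar\cU$. Combining the two facts forces $\bar\cU=\{\sigma\}$, and as a by-product one learns that $\sigma\in\ex(\cA)$.

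Finally I would translate this back to Kusuoka sets through Theorem \ref{th-uniq}, which identifies the minimal Kusuoka set as $\cT^{-1}(\bar\cU)$. Since $\cT$ is a bijection of $\cP$ onto $\cS$ with inverse given by (\ref{4k}), we obtain $\cT^{-1}(\bar\cU)=\cT^{-1}(\{\sigma\})=\{\cT^{-1}\sigma\}$, the desired singleton. I would also note for consistency that $\cT^{-1}\sigma\in\cP_q$, because $\cT(\cT^{-1}\sigma)=\sigma\in\Z^*$ places it in the set defined by (\ref{setp}).

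I do not expect a genuine obstacle here, as the entire content has been front-loaded into Corollary \ref{cor-1} and Theorem \ref{th-uniq}. The only points requiring a moment of care are verifying that a singleton is weakly$^*$ closed and that a generating set must be nonempty, so that the squeeze $\bar\cU\subset\{\sigma\}$ together with $\bar\cU\neq\emptyset$ collapses to equality. An alternative but far more laborious route would be to compute $\ex(\cA)$ directly and argue that $\sigma$ is the unique right side continuous monotonically nondecreasing exposed point; the argument above sidesteps this entirely by exploiting the uniqueness and minimality characterization.
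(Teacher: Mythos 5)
Your proposal is correct and follows exactly the route the paper intends: the corollary is stated as an immediate consequence of Theorem \ref{th-uniq} (via Corollary \ref{cor-1}), and your argument simply spells out that deduction, including the two small points (a singleton is weakly$^*$ closed; a generating set is nonempty) that the paper leaves implicit. No gap.
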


\section{Kusuoka representation on general spaces}
\label{sec:general}
\setcounter{equation}{0}

So far we assumed that the probability space is nonatomic and used the standard probability space $(\O,\F,P)$.
Since $\rho$ is law invariant, it can be viewed as a function of the corresponding cdf $F(z)=P(Z\le z)$, and we can write $\rho(F)$ rather than $\rho(Z)$. The condition that $\rho$ is defined on the space $\Z=L_p(\O,\F,P)$ means that the space of cdfs on which $\rho$ is defined is such that $F^{-1}\in \Z$.   Note that
\begin{equation}\label{genkus1}
\bbe_F|Z|^p=\int_{-\infty}^{+\infty}|z|^p dF(z)=
\int_0^1 |F^{-1}(t)|^pdt,
\end{equation}
and hence $F^{-1}\in \Z$ means that $\bbe_F|Z|^p<\infty$.

Now let $(\hat{\O},\hat{\F},\hat{P})$ be a general probability space, not necessarily nonatomic. We still can talk about law invariant coherent risk measures $\varrho:\hat{\Z}\to\bbr$ defined on $\hat{\Z}:=L_p(\hat{\O},\hat{\F},\hat{P})$. Again we can view $\varrho(F)$ as a function of the cdf $F$. Note, however, that depending on the choice of $(\hat{\O},\hat{\F},\hat{P})$, the space of allowable cdfs can be different. Suppose, for example, that the space $\hat{\O}=\{\w_1,...,\w_n\}$ is finite with respective probabilities $p_i>0$, $i=1,...,n$. Then the cdf $F$  of a random variable $Z:\hat{\O}\to\bbr$ can be any piecewise constant cdf  with $n$ jumps of sizes $p_i$ at arbitrary points $z_1,...,z_n$. The corresponding $F^{-1}$ is a step function
\begin{equation}\label{genkus2}
F^{-1} =\sum_{i=1}^n z_{\pi(i)}\ind_{I_{\pi(i)}},
\end{equation}
where  $z_{\pi(1)}\leq\cdots\leq z_{\pi(n)}$ are numbers $z_i$ arranged in the increasing order and $\{I_{\pi(1)},...,I_{\pi(n)}\}$ is the partition of the interval [0,1] into    intervals of the respective  lengths $p_{\pi(1)},...,p_{\pi(n)}$; in order for $F^{-1}(\cdot)$ to be right side continuous these should be half open intervals of the form  $I_i=[a_i,b_i)$.

To any law invariant coherent risk measure $\rho:\Z\to\bbr$ we can correspond  its restriction $\hat{\rho}(F)$ to the class of cdfs $F=F_Z$  associated with random variables $Z\in\hat{\Z}$. For example, for finite $\hat{\O}=\{\w_1,...,\w_n\}$ these are cdfs having inverse of
the form (\ref{genkus2}). Of course, the space of (monotonically nondecreasing) step functions of the form (\ref{genkus2}) is much smaller than the space of (monotonically nondecreasing) $p$-power integrable functions on the interval [0,1]. Therefore the class of law invariant coherent risk measures $\varrho:\hat{\Z}\to\bbr$ can be larger than the class of law invariant coherent risk measures $\hat{\rho}$.

\begin{definition}
\label{def:restr}
Let $(\O,\F,P)$ be the standard probability space,
$\Z:=L_p(\O,\F,P)$, $p\in [1,\infty)$,  and $\hat{\Z}:=L_p(\hat{\O},\hat{\F},\hat{P})$ for a general probability space $(\hat{\O},\hat{\F},\hat{P})$. We say that a law invariant coherent risk measure $\varrho:\hat{\Z}\to \bbr$ is {\em  regular} if there exists a law invariant coherent risk measure $\rho:\Z\to\bbr$ such that $\varrho=\hat{\rho}$, where $\hat{\rho}$ is the restriction of $\rho$ to $\hat{\Z}$ in the sense explained above.
Sometimes we use the term \emph{$p$-regular} risk measure to emphasize choice of the space $\Z:=L_p(\O,\F,P)$.
\end{definition}

By (\ref{genrepr})
for a  regular  law invariant coherent risk measure $\varrho=\hat{\rho}$ we can write for $F=F_Z$, $Z\in \hat{\Z}$,  the dual representation
\begin{equation}\label{dualred}
 \varrho(F)=\sup_{\sigma\in \Upsilon}\int_0^1 \sigma(t) F^{-1}_Z(t)dt,
\end{equation}
where $\Upsilon$ is a generating set of the corresponding  risk measure $\rho:\Z\to\bbr$,
and the respective   Kusuoka representation
\begin{equation}\label{genkus3}
 \varrho(F)=\sup_{\mu\in \cM}\int_0^1 \avr_\alpha (F)d\mu(\alpha),
\end{equation}
where $\cM$ is the set of probability measures corresponding to the risk measure $\rho:\Z\to\bbr$. Conversely, if the representation (\ref{dualred}) or
the Kusuoka  representation (\ref{genkus3}) holds and the  right hand side of (\ref{dualred}) (of (\ref{genkus3})) is well defined and  finite for every $F=F_Z$, $Z\in \Z$, then $\varrho$ is a regular  law invariant coherent risk measure. Hence we have the following.

\begin{proposition}
\label{pr-reg}
Law invariant coherent  risk measure  $\varrho:\hat{\Z}\to\bbr$ is $p$-regular iff there exists a set $\Upsilon\subset L_q(\O,\F,P)$ of spectral functions such that the representation {\rm (\ref{dualred})} holds, or equivalently iff  the Kusuoka representation {\rm (\ref{genkus3})} holds.
\end{proposition}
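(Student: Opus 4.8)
The plan is to prove the two implications separately and to treat the equivalence of the dual representation (\ref{dualred}) and the Kusuoka representation (\ref{genkus3}) as a direct consequence of the change of variables supplied by Proposition \ref{pr-gen} and identity (\ref{5k}).

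First, for the ``only if'' direction, suppose $\varrho$ is $p$-regular. By Definition \ref{def:restr} there is a law invariant coherent risk measure $\rho:\Z\to\bbr$ on the standard space with $\varrho=\hat\rho$. The machinery of Section \ref{sec:Kusuoka} applies to $\rho$: its dual representation (\ref{7k}) has an $\cF$-invariant weakly$^*$ compact set $\cA\subset\Z^*=L_q(\O,\F,P)$, and by Corollary \ref{cor-1} the set $\cU=\cS\cap\ex(\cA)$ is a generating set, so (\ref{genrepr}) holds with $\Upsilon:=\cU\subset L_q(\O,\F,P)$. Restricting (\ref{genrepr}) to those $F=F_Z$ with $Z\in\hat\Z$ -- which is exactly the content of $\varrho=\hat\rho$ -- yields (\ref{dualred}) with this $\Upsilon$. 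The Kusuoka representation (\ref{genkus3}) then follows by setting $\cM:=\cT^{-1}(\Upsilon)$ and invoking Proposition \ref{pr-gen}.

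Second, for the ``if'' direction, assume (\ref{dualred}) holds for some set $\Upsilon\subset L_q(\O,\F,P)$ of spectral functions, the right hand side being well defined and finite for every $F=F_Z$ with $Z\in\Z$ (this finiteness is what is needed to obtain a real valued risk measure on all of $\Z$). I would then define $\rho:\Z\to\bbr$ on the whole standard space by $\rho(Z):=\sup_{\sigma\in\Upsilon}\int_0^1\sigma(\tau)F_Z^{-1}(\tau)d\tau$ and verify that $\rho$ is a law invariant coherent risk measure. Law invariance is immediate, since $\rho(Z)$ depends on $Z$ only through its quantile function $F_Z^{-1}$. For coherence I would use the rearrangement identity $\int_0^1\sigma(\tau)F_Z^{-1}(\tau)d\tau=\sup_{T\in\cF}\lan\sigma\circ T,Z\ran$, valid because $F_Z^{-1}$ is a monotone rearrangement of $Z$ (Remark \ref{rem-de}), which exhibits $\rho$ as $\rho(Z)=\sup_{\zeta\in\OO(\Upsilon)}\lan\zeta,Z\ran$, a supremum of continuous linear functionals over the orbit (\ref{orbit}); convexity and positive homogeneity are then automatic, monotonicity follows from $\zeta\ge0$, and translation equivariance from $\int_0^1\sigma(\tau)d\tau=1$. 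Real-valuedness is guaranteed by the standing finiteness assumption. Restricting $\rho$ to $F=F_Z$, $Z\in\hat\Z$, returns exactly (\ref{dualred}), so $\hat\rho=\varrho$ and $\varrho$ is $p$-regular by Definition \ref{def:restr}. The case where one starts from (\ref{genkus3}) reduces to this one by putting $\Upsilon:=\cT(\cM)$ and using (\ref{5k}).

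The main obstacle I anticipate is not conceptual but bookkeeping around domains: the representation (\ref{dualred}) is only asserted on the (possibly much smaller) cone of quantile functions coming from $\hat\Z$, whereas to certify regularity I must produce a risk measure that is finite valued and coherent on all of $\Z$. The crux is therefore the finiteness hypothesis -- that the supremum over $\Upsilon$ stays finite for every $Z\in\Z$, not merely for $Z\in\hat\Z$ -- which is precisely what lets the formula extend to a genuine coherent risk measure on the standard space; everything else (convexity via the rearrangement identity, and the $\cT$-correspondence between the two representations) is already furnished by Proposition \ref{pr-gen} and the results of Section \ref{sec:Kusuoka}.
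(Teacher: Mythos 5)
Your proof is correct and takes essentially the same route as the paper, which derives the proposition from the discussion immediately preceding it: the ``only if'' direction restricts the generating-set representation (\ref{genrepr}) of the lifted measure $\rho$ to $\hat{\Z}$, and the ``if'' direction extends the formula (\ref{dualred}) to a coherent law invariant risk measure on all of $\Z$ under the finiteness caveat. Your explicit flagging that finiteness must hold for every $Z\in\Z$ (not merely $Z\in\hat{\Z}$) is exactly the hypothesis the paper builds into its converse statement.
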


 In Pflug and  R\"omisch \cite[p.63]{pfl07} is given an example of a law invariant coherent risk measure defined on   space of random variables $Z:\hat{\O}\to \bbr$, with   $\hat{\O}=\{\w_1,\w_2\}$ having two elements with unequal probabilities $p_1\ne p_2$, for which the Kusuoka representation does not hold. Thus this  gives an example of a nonregular risk measure (see also Example \ref{ex-fin} below).

\begin{example}
\label{ex-cons}
{\rm
Let us consider the following construction.
Let $(\O,\F,P)$ be the standard probability space and   $\Z:=L_p(\O,\F,P)$ for some $p\in [0,1)$. Let    $\G$ be a sigma subalgebra of $\F$ and $\hat{\Z}=L_p(\O,\G,P)$ be the space of $\G$-measurable random variables $Y\in \Z$. For example, consider a countable  probability space $\hat{\O}=\{\w_1,...\}$ with respective probabilities $p_i>0$, $i=1,...$,  $\sum p_i=1$. We can embed the space of random variables $Y:\hat{\O}\to \bbr$ into $\Z$  by using the following construction. Consider the  partition of the interval [0,1) into intervals  $I_1=[0,p_1)$, $I_2=[p_1,p_1+p_2), ...$, of respective lengths $p_1,p_2,...$.
Let $\G$ be the subalgebra of $\F$ generated by sets $I_1,I_2,...$. Then the corresponding space $\hat{\Z}$ is formed by random variables $Y:[0,1)\to\bbr$ being piecewise constant on each $I_i$, and $\hat{\Z}$ can be identified with the space of random variables on $\hat{\O}$.

We have here  that a law invariant coherent risk measure $\vr:\hat{\Z}\to\bbr$ is   regular  if there exists a law invariant coherent risk measure $\rho:\Z\to\bbr$ such that\footnote{By $\rho  |_{\hat{\Z}}$ we denote the restriction of $\rho$ to $\hat{\Z}$.}
$\rho  |_{\hat{\Z}}=\vr$.
Let $\cF$ be the group of measure preserving transformations of the standard  space (see definition \ref{def-group}).
Consider  the following  subgroup of $\cF$:
\[
\cG:=\{T\in \cF:  T(A)\in \G,\; \forall A\in \G\}.
\]
We have that  $\eta_1,\eta_2\in \hat{\Z}^*$ are distributionally equivalent if there exists $T\in \cG$ such that $\eta_1=\eta_2\circ T$. As it was pointed in Remark \ref{rem-de}, two random variables defined on a {\em nonatomic} space are distributionally equivalent if and only if  one can be transformed into the other by a measure preserving transformation. Clearly  the ``if" part is true for spaces with atoms as well.

Consider the following condition.
\begin{itemize}
\item[(A)]
For  any $\eta\in \hat{\Z}^*$, $\eta\ge 0$,   there exists  monotonically nondecreasing right hand side continuous function $\zeta : [0,1)\to \bbr$  and $T\in\cG$  such that $\eta=\zeta\circ T$ a.e.
\end{itemize}

\begin{theorem}
\label{th-reg}
If    condition  {\rm (A)} holds, then every law invariant coherent risk measure $\vr:\hat{\Z}\to\bbr$ is regular.
\end{theorem}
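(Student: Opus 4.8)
The plan is to extract from the dual set of $\vr$ a generating set of spectral functions on the standard space and then quote Proposition~\ref{pr-reg}. Since $\vr$ is real valued it is continuous, so exactly as in (\ref{7k}) it admits the dual representation $\vr(Y)=\sup_{\eta\in\hat{\Z}^*\cap\hat\cA}\lan\eta,Y\ran$, $Y\in\hat{\Z}$, where $\hat\cA\subset\hat{\Z}^*=L_q(\O,\G,P)$ is convex, weakly$^*$ compact and consists of densities ($\eta\ge0$, $\bbe\eta=1$). Law invariance of $\vr$ makes $\hat\cA$ invariant under $\cG$, the analogue for the embedded space of the $\cF$-invariance of $\cA$ used in Section~\ref{sec:Kusuoka}. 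I would then put $\Upsilon:=\{F_\eta^{-1}:\eta\in\hat\cA\}$. Each $F_\eta^{-1}$ is nonnegative, right side continuous, nondecreasing and integrates to $\bbe\eta=1$, so $\Upsilon\subset\cS$; and since rearrangement is $L_q$-isometric while weak$^*$ compactness bounds $\{\|\eta\|_q:\eta\in\hat\cA\}$, H\"older's inequality makes $\sup_{\sigma\in\Upsilon}\int_0^1\sigma(t)F_Z^{-1}(t)\,dt$ finite for every $Z\in\Z$.

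The role of condition (A) is to supply a sorting property: every $\G$-measurable $W$ can be monotonically rearranged inside $\cG$, i.e. there is $R\in\cG$ with $W\circ R=F_W^{-1}$ a.e. To see this I would apply (A) to $\phi(W)$, where $\phi$ is a fixed bounded strictly increasing map (say $\phi(\cdot)=\arctan(\cdot)+\pi/2$): since $\phi(W)$ is bounded, nonnegative and $\G$-measurable it lies in $\hat{\Z}^*$, so (A) supplies $T\in\cG$ and a nondecreasing $\zeta$ with $\phi(W)=\zeta\circ T$. Then $R:=T^{-1}\in\cG$ gives $\phi(W)\circ R=\zeta$ nondecreasing; as $\phi$ is strictly increasing, $W\circ R$ is nondecreasing, and being distributionally equivalent to $W$ it equals $F_W^{-1}$ by Remark~\ref{rem-de}.

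It remains to verify (\ref{dualred}), i.e. $\vr(Y)=\sup_{\sigma\in\Upsilon}\int_0^1\sigma F_Y^{-1}$ for $Y\in\hat{\Z}$. The Hardy--Littlewood inequality $\lan\eta,Y\ran\le\int_0^1 F_\eta^{-1}F_Y^{-1}$ gives ``$\le$''. For ``$\ge$'' I fix $\eta\in\hat\cA$ and apply the sorting property twice: to $\eta$, obtaining $S\in\cG$ with $\eta\circ S=F_\eta^{-1}$ (so in particular $F_\eta^{-1}=\eta\circ S\in\hat\cA$ by $\cG$-invariance), and to $Y$, obtaining $R\in\cG$ with $Y\circ R=F_Y^{-1}$. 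Setting $\eta':=F_\eta^{-1}\circ R^{-1}=\eta\circ(SR^{-1})$ we get $\eta'\in\hat\cA$, while the measure preserving change of variables $u=R^{-1}(t)$ yields
\[
\lan\eta',Y\ran=\int_0^1 F_\eta^{-1}(u)\,(Y\circ R)(u)\,du=\int_0^1 F_\eta^{-1}(u)F_Y^{-1}(u)\,du.
\]
Thus every integral $\int_0^1 F_\eta^{-1}F_Y^{-1}$ is attained as $\lan\eta',Y\ran$ with $\eta'\in\hat\cA$, giving ``$\ge$''. With (\ref{dualred}) in force and its right hand side finite on all of $\Z$, Proposition~\ref{pr-reg} yields that $\vr$ is $p$-regular.

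I expect the main obstacle to be precisely the sorting property together with the consequent membership $\eta'\in\hat\cA$: this is the only place where (A) is essential, since for a general $\G$ (for instance atoms of unequal probability) the group $\cG$ is too small to realize the monotone rearrangement, and then both the construction and regularity itself genuinely fail, as the Pflug--R\"omisch example shows. Ties in $F_Y$ (atoms of the law of $Y$) cause no difficulty here: permuting the values of $\eta'$ within a level set $\{Y=v\}$ leaves $\lan\eta',Y\ran$ unchanged, so only the block structure enforced by $R$ matters.
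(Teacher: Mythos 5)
Your proof is correct and follows essentially the same route as the paper: use the $\cG$-invariance of the dual set $\hat{\cA}$ together with condition (A) to replace each density by its monotone rearrangement, then lift the resulting set of spectral functions to the standard space. You are in fact more explicit than the paper on the reverse inequality, where (A) must also be applied (via your $\phi$-trick) to sort $Y$ itself so that $\int_0^1 F_\eta^{-1}(u)F_Y^{-1}(u)\,du$ is attained by some element of $\hat{\cA}$.
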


\begin{proof}
Let $\vr:\hat{\Z}\to\bbr$ be a law invariant coherent risk measure. It has the dual representation
\begin{equation}\label{lin1}
\vr(Y)=\sup_{\eta\in \hat{\cA}} \int \eta(\w) Y(\w)dP(\w),
\end{equation}
where $\hat{\cA}\subset \hat{\Z}^*$ is a set of density  functions.  Since $\vr$ is law invariant, the set $\hat{\cA}$  is invariant
 with respect to transformations of the group $\cG$ (see, e.g., proof of  Theorem 3.2 in \cite{sha:2012}).

Suppose that condition (A) holds.
Let $Y\in \hat{\Z}$ be monotonically nondecreasing function and
$\hat{\Upsilon}$ be the subset of $\hat{\cA}$ formed by monotonically nondecreasing $\eta\in \hat{\cA}$. Since
$\hat{\cA}$  is invariant with respect to transformations of the group $\cG$ it follows by property (A) that it suffices to take the maximum in (\ref{lin1}) with respect to   $\eta\in \hat{\Upsilon}$. Define now
\begin{equation}\label{lin2}
\rho(Z)=\sup_{\eta\in \hat{\Upsilon}} \int \eta(\w) F^{-1}_Z(\w)dP(\w).
\end{equation}
We have that $\rho$ is a law invariant coherent risk measure and $\rho  |_{\hat{\Z}}=\vr$.  \fb
 \end{proof}
}
\end{example}

If the space  $\hat{\O}=\{\w_1,...,\w_n\}$ is finite, equipped with equal probabilities, then the corresponding group $\cG$ of measure preserving transformations is given by the set of permutations of the set $\hat{\O}$. It follows that condition (A) holds and hence by Theorem \ref{th-reg} we have the following:

\begin{cor}
\label{cor-finite}
Let $\hat{\O}=\{\w_1,...,\w_n\}$ be finite, equipped  with equal probabilities $p_i=1/n$, $i=1,...,n$. Then every law invariant coherent risk measure $\vr:\hat{\Z}\to\bbr$  is regular.
\end{cor}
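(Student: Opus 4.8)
The plan is to obtain the corollary as an immediate consequence of Theorem \ref{th-reg}. That theorem guarantees regularity of every law invariant coherent risk measure $\vr:\hat\Z\to\bbr$ once condition {\rm (A)} is known to hold, so the whole task reduces to verifying {\rm (A)} for the finite equiprobable space. Following the embedding of Example \ref{ex-cons}, I would partition $[0,1)$ into the $n$ intervals $I_i=[(i-1)/n,\,i/n)$, each of length $1/n$, let $\G$ be the subalgebra they generate, and identify $\hat\Z$ with the functions that are constant on every $I_i$. Under this identification a density $\eta\in\hat\Z^*$ with $\eta\ge 0$ is a step function $\eta=\sum_{i=1}^n a_i\ind_{I_i}$ with $a_i\ge 0$.

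First I would record that $\cG$ contains every interval permutation map. Given a permutation $\pi$ of $\{1,\dots,n\}$, the piecewise translation $T_\pi$ that sends each $I_i$ onto $I_{\pi(i)}$ is one-to-one and onto, and it is measure preserving precisely because all the intervals share the common length $1/n$; moreover it carries any $A\in\G$ (a finite union of intervals $I_i$) to the corresponding union of intervals $I_{\pi(i)}$, so $T_\pi(A)\in\G$ and hence $T_\pi\in\cG$. This is the single point at which the equal probabilities $p_i=1/n$ are used, and it is exactly what would fail for unequal probabilities, where a measure preserving map cannot match intervals of different length.

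Next I would construct the data required in {\rm (A)}. Let $\tau$ be a permutation that sorts the values, so that $a_{\tau(1)}\le a_{\tau(2)}\le\cdots\le a_{\tau(n)}$, and put $\zeta:=\sum_{k=1}^n a_{\tau(k)}\ind_{I_k}$. Because the intervals are ordered from left to right and are half open of the form $[a,b)$, this $\zeta$ is monotonically nondecreasing and right side continuous, hence a legitimate candidate in {\rm (A)}. Choosing $T:=T_{\tau^{-1}}\in\cG$, so that $T(I_j)=I_{\tau^{-1}(j)}$, one computes for $\w\in I_j$ that $(\zeta\circ T)(\w)=a_{\tau(\tau^{-1}(j))}=a_j=\eta(\w)$, using only that $\tau\circ\tau^{-1}$ is the identity permutation. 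Hence $\eta=\zeta\circ T$ a.e., which is precisely condition {\rm (A)}.

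With {\rm (A)} established, Theorem \ref{th-reg} applies verbatim and yields that every law invariant coherent risk measure $\vr:\hat\Z\to\bbr$ is regular, completing the proof. I expect the only mildly delicate step to be the verification that $\cG$ contains the interval permutation maps; the remaining content is the elementary fact that a nonnegative step function on equal length intervals is monotonized by permuting those intervals, which both supplies the monotone $\zeta$ and reconciles the construction with the counterexample for unequal probabilities cited from Pflug and R\"omisch.
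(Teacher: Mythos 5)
Your proposal is correct and follows the same route as the paper: the paper derives the corollary by observing that for equal probabilities the group $\cG$ consists of the permutations of $\hat{\O}$, so condition (A) holds and Theorem \ref{th-reg} applies. You have simply filled in the details (the interval permutation maps and the monotone rearrangement) that the paper leaves implicit.
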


In the setting of   Corollary \ref{cor-finite},
$F_Z^{-1}(\cdot)$, $Z\in \hat{\Z}$, are piecewise constant (step) functions. Thus the spectral function $\sigma(\cdot)$ can be changed on the respective  intervals $I_i$ (of length $1/n$)  without changing the corresponding integral.  For example, the spectral functions $\sigma(\cdot)$ can be taken to be step functions  constant on the intervals $I_i$.  Consequently, although every  law invariant coherent risk measure $\vr:\hat{\Z}\to\bbr$  is regular here, the corresponding (lifted)  risk measure $\rho:\Z\to\bbr$ is not unique.

\begin{example}
\label{ex-fin}
{\rm
Let $(\hat{\O},\hat{\F},\hat{P})$ be a probability space with atoms. Consider the setting of Example~\ref{ex-cons} and assume that $(\hat{\O},\hat{\F},\hat{P})$ can be identified with the corresponding embedded space $(\O,\G,P)$ for some sigma algebra $\G\subset \F$,  and consider the   space $\hat{\Z}=L_p(\O,\G,P)$. For some $\hat{p}>0$, let
 $\hat{A}\in \hat{\F}$ be a (nonempty)  set of all atoms having the same  probability $\hat{p}$. Clearly the set $\hat{A}$ is finite,  let $k:=|\hat{A}|$ be cardinality of $\hat{A}$ (the set $\hat{A}$ could be a singleton, i.e., it could be that $k=1$). Let $A\in \G$ be the corresponding subset of $\O=[0,1]$ (given by union of the respective subintervals of $[0,1]$). Assume that  $\hat{\O}$ is not a finite set with equal probabilities (as in Corollary \ref{cor-finite}), so
 that   $\hat{P}(\O\setminus A)>0$.
Define
\begin{equation}\label{nonreg}
 \vr(Y):=\frac{1}{k}\sum_{\w\in \hat{A}}  Y(\w).
\end{equation}
Clearly $\vr:\hat{Z}\to \bbr$ is a coherent risk measure.
Suppose further  that the following condition holds.
\begin{itemize}
\item[(B)]
If $Y,Y'\in \hat{Z}$ are distributionally equivalent, then there exists a permutation $\pi$ of the set $\hat{A}$ such that $Y'(\w)=Y(\pi(\w))$ for any $\w\in \hat{A}$.
\end{itemize}
Under this condition, the  risk  measure $\vr$ is   law invariant as well. Let us show that it is not regular.

 Indeed, let us argue by a contradiction.   Suppose that
 $\vr=\hat{\rho}$ for some  law invariant coherent risk measure $\rho:\Z\to \bbr$ and  $\hat{\rho}:=\rho  |_{\hat{\Z}}$. Note that
  $\vr(Y)$ depends only on values of $Y(\cdot)$ on the set $\hat{A}$, i.e.,
 for any $Y,Y'\in\hat{\Z}$ we have that  $\rho (Y)= \rho (Y')$ if $Y(\w)=Y'(\w)$ for all $\w\in A$. In particular, if $Y(\w)=0$ for all $\w\in A$, then  $\rho (Y)=0$.
 It follows that $\rho  (\ind_{B})=0$ for any $\G$-measurable  set $B\subset \O\setminus A$. Note that the set $\O\setminus A$ has a nonempty interior since the set $A$ is a union of a finite number of intervals,  and hence we can take $B\subset \O\setminus A$ to be an open interval. 
Since $\rho$ is law invariant it follows that  $\rho  (\ind_{T(B)})=0$ for any $T\in \cF$. Hence there is a family of sets $B_i\in \F$, $i=1,...,m$, such that $\rho  (\ind_{B_i})=0$, $i=1,...,m$, and $[0,1]=\cup_{i=1}^m B_i$. It follows that for any bounded  $Z\in\Z$ there are   $c_i\in\bbr$, $i=1,...,m$, such that $Z\le \sum_{i=1}^m c_i \ind_{B_i}$. Consequently
 $
 \rho(Z) \le \sum_{i=1}^m c_i \rho(\ind_{B_i})=0,
 $
 this clearly is a contradiction.

It follows that the risk measure $\vr$, defined in (\ref{nonreg}),  does not have a Kusuoka representation. Note that it was only essential in the above construction that the restriction of the risk measure  $\vr:\hat{Z}\to\bbr$ to the set $\hat{A}$ is a law invariant coherent risk measure. So, for example, under the above assumptions  the risk measure
$\vr(Y):=\max\{Y(\w):\w\in \hat{A}\}$ is also not regular.

As far as condition (B) is concerned, suppose for example that
the set $\hat{\O}=\{\w_1,...,\w_n\}$ is finite, equipped with respective probabilities $p_1,...,p_n$. For some $\hat{p}\in \{p_1,...,p_n\}$ let $K\subset \{1,...,n\}$ be the index set such that $p_i=\hat{p}$ for all $i\in K$.   Suppose that
\begin{equation}\label{nonrcon}
 \sum_{i\in I}p_i\ne \sum_{j\in J}p_j,\;\;\forall 
 I\subset K,\;\forall J\subset\{1,...,n\}\setminus K. 
\end{equation}
 Then  condition (B) holds for the set   $\hat{A}:=\{\w_i:i\in K\}$. That is, condition (\ref{nonrcon}) ensures existence of a nonregular risk measure $\vr$. 
}
\end{example}

\section{Maximality of Kusuoka sets}
\label{sec;maxim}
\setcounter{equation}{0}

In this section we discuss Kusuoka representations with respect to stochastic dominance relations.
As before we consider probability measures $\mu$ supported on the interval  $[0,1]$ and continue identifying the measure $\mu$ with its cumulative distribution function $\mu(\alpha)= \mu\{ (-\infty,\alpha]\}$, $\alpha\in \bbr$. Recall that since $\mu$ is supported on [0,1], it follows that $\mu(\alpha)=0$ for $\alpha<0$ and $\mu(\alpha)=1$ for $\alpha\ge 1$.

\begin{definition}
\label{def:firstord}
It is said that $\mu_1$ is {\em dominated in first stochastic order} by $\mu_2$, denoted $\mu_{1}\fo\mu_{2}$, if  $\mu_{1}\left(\alpha\right)\ge\mu_{2}\left(\alpha\right)$
for all $\alpha\in\bbr$.
If moreover, $\mu_1\ne \mu$ we write $\mu_{1}\fos\mu_{2}$.
\end{definition}

A set of measures $(\cM, \fo)$, equipped with the first order stochastic dominance relation, is a partially ordered set.

\medskip
In the space of functions $\sigma\in \Z^*$ we consider the following dominance relation.
\begin{definition}
\label{def:secord}
For $\sigma_1,\sigma_2\in\Z^*$  it is said that $\sigma_1$ is {\em majorized} by $\sigma_2$, denoted $\sigma_1\so\sigma_2$,  if
\begin{eqnarray}
\label{maj1}
&\int_\gamma^1 \sigma_1(t)dt\le \int_\gamma^1 \sigma_2(t)dt\;\; \text{ for all } \gamma\in [0,1], \text{  and}\\
\label{maj2}
&  \int_0^1 \sigma_1(t)dt= \int_0^1 \sigma_2(t)dt.
\end{eqnarray}
\end{definition}

 \begin{remark}
\label{rem-1}
 {\rm
For monotonically nondecreasing functions $\sigma_1,\sigma_2:[0,1]\to\bbr$ the above concept of majorization is closely related to the concept of  dominance in the convex order. That is, if $\sigma_1$ and $\sigma_2$
are monotonically nondecreasing   right  side continuous  functions, then they can be viewed as right side quantile functions $\sigma=F^{-1}_{Z_1}$ and $\sigma_2=F_{Z_2}^{-1}$ of some respective random variables $Z_1$ and $Z_2$. It is said  that $Z_1$ dominates $Z_2$ in the convex order if $\bbe[u(Z_1)]\ge \bbe[u(Z_2)]$ for all   convex functions $u:\bbr\to\bbr$ such that the expectations exist. Equivalently this can written as (see, e.g., M\"uller and   Stoyan \cite{mulst})
\[
\int_\gamma^1 F^{-1}_{Z_1}(t)dt\ge \int_\gamma^1 F^{-1}_{Z_2}(t)dt,\;\forall \gamma\in [0,1];\;{\rm and} \;
\bbe[Z_1]= \bbe[Z_2].
\]
The  dominance in the convex (concave)  order was used in studying risk measures  in F\"ollmer and  Schield \cite{fol04}  and Dana \cite{dana}, for example.
}
\end{remark}

 Note that if $\sigma_1,\sigma_2\in \Z^*\cap\cS$ and $\sigma_1\so\sigma_2$, then
 \begin{equation}\label{soineq}
\int_0^1 \sigma_1(t)F^{-1}_Z(t)dt\le \int_0^1 \sigma_2(t)F^{-1}_Z(t)dt, \;\;Z\in \Z.
 \end{equation}

\begin{example}
{\rm
Let $\rho$ be given by maximum of a finite number of spectral risk measures, i.e.,
\begin{equation}\label{maxspec}
\rho(Z):=\max_{1\leq i\le n} \int_0^1 \sigma_i(t)F^{-1}_Z(t)dt,\;\;Z\in \Z,
\end{equation}
for some $\sigma_i\in \Z^*\cap\cS$, $i=1,...,n$.
Then the set $\Upsilon:=\{\sigma_1,...,\sigma_n\}$ is
 a generating set and the convex hull of $\OO(\Upsilon)$ is the dual set of $\rho$.  An element $\sigma_i$ of $\{\sigma_1,...,\sigma_n\}$ is an exposed point of the dual set iff  $\sigma_i$ can be strongly separated from the other $\sigma_j$, i.e., iff there exists $Z\in \Z$ such that
$
 \lan  \sigma_i,Z\ran \ge \lan  \sigma_j,Z\ran+\e,
$
 for some $\e>0$ and all $j\ne i$.  So the generating  set $\Upsilon$ is minimal   iff every $\sigma_i$ can be strongly separated from the other $\sigma_j$.

 If there is $\sigma_i$ which is majorized by another $\sigma_j$, then it follows  by (\ref{soineq}) that removing $\sigma_i$ from the set $\Upsilon$ will not change the corresponding maximum, and
 hence $\Upsilon\setminus\{\sigma_i\}$ is still a generating set. Therefore the condition:  ``{\em every $\sigma_i$  is not majorized by any other $\sigma_j$}" is necessary for the set $\Upsilon$ to be minimal. For $n=2$ this condition is also sufficient. However,  it is not sufficient already  for $n=3$.
 For example let $\sigma_1$ and $\sigma_2$ be such that
 $\sigma_1$ is not majorized by  $\sigma_2$,  and $\sigma_2$ is not majorized by  $\sigma_1$, and let $\sigma_3:=(\sigma_1+\sigma_2)/2$. Then clearly $\sigma_3$ can be removed from $\Upsilon$, while $\sigma_3$ is not majorized by $\sigma_1$ or $\sigma_2$. Indeed, if $\sigma_3$ is majorized say by $\sigma_1$, then
 \[
 \int_\gamma^1 \sigma_3 (t)dt=\half\left(\int_\gamma^1 \sigma_1(t)dt+\int_\gamma^1 \sigma_2(t)dt\right)
 \le \int_\gamma^1 \sigma_1(t)dt,\;\gamma\in [0,1],
 \]
and hence
\[
 \int_\gamma^1 \sigma_2(t)dt\le \int_\gamma^1 \sigma_1(t)dt,\;\gamma\in [0,1],
 \]
a contradiction with the condition that $\sigma_2$ is not majorized by $\sigma_1$.
}
\end{example}

We show now that first order dominance of measures is transformed by  the operator $\cT$ into majorization order in the sense of Definition \ref{def:secord}.
The converse implication, $$\{\sigma_1\so\,\sigma_2\}\Rightarrow\{\cT^{-1}
\sigma_1\fo\,\cT^{-1}\sigma_2\}$$  does not hold in general. This shows  that the first order stochastic dominance indeed is a stronger concept. A simple counterexample is provided by the measures $\mu_1:=0.1 \delta_{0.2}+0.9 \delta_{0.6}$ and $\mu_2:=0.2 \delta_{0.5}+0.8 \delta_{0.9}$.

\begin{proposition}
\label{pr:orders}
For  $\mu_1,\mu_2\in \cP_q$ it holds that
if $\mu_1\fo\mu_2$, then $\cT\mu_1\so \cT\mu_2$.
\end{proposition}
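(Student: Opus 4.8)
The plan is to verify directly the two defining conditions of the majorization relation $\so$ in Definition \ref{def:secord} for the spectral functions $\sigma_i := \cT\mu_i$, $i=1,2$. The normalization condition (\ref{maj2}) requires essentially no work: since $\cT$ maps $\cP$ into the set $\cS$ of spectral functions, both $\cT\mu_1$ and $\cT\mu_2$ integrate to $1$ over $[0,1]$, so $\int_0^1 (\cT\mu_1)(t)dt = 1 = \int_0^1 (\cT\mu_2)(t)dt$ (alternatively, a one-line application of Fubini to the definition (\ref{3k}) gives $\int_0^1(\cT\mu)(t)dt=\int_{[0,1)}(1-\alpha)^{-1}(1-\alpha)\,d\mu(\alpha)=1$). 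Hence the whole matter reduces to the inequality (\ref{maj1}), namely $\int_\gamma^1 (\cT\mu_1)(t)dt \le \int_\gamma^1 (\cT\mu_2)(t)dt$ for every $\gamma \in [0,1)$; the case $\gamma=1$ is trivial since both sides vanish.

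The key step is to obtain a closed-form expression for $\int_\gamma^1 (\cT\mu)(t)dt$ in terms of the distribution function $\mu(\cdot)$. Substituting the definition (\ref{3k}) and interchanging the order of integration by Fubini's theorem (legitimate on the compact region $\{(\alpha,t): 0\le\alpha\le t\le\gamma\}$, where the integrand is bounded because $\gamma<1$), I would first compute $\int_0^\gamma (\cT\mu)(t)dt = \int_{[0,\gamma]} \frac{\gamma-\alpha}{1-\alpha}\,d\mu(\alpha)$. Next I would integrate by parts, using that the factor $\alpha\mapsto(\gamma-\alpha)/(1-\alpha)$ is continuously differentiable on $[0,\gamma]$ with derivative $-(1-\gamma)/(1-\alpha)^2$; this Stieltjes integration by parts is valid even when $\mu$ carries atoms, precisely because the other factor is smooth. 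Combining this with $\mu(0^-)=0$ and the vanishing of $(\gamma-\alpha)/(1-\alpha)$ at $\alpha=\gamma$, and subtracting the result from the total mass $\int_0^1(\cT\mu)(t)dt=1$, yields the identity
\begin{equation*}
\int_\gamma^1 (\cT\mu)(t)\,dt = 1 - (1-\gamma)\int_0^\gamma \frac{\mu(s)}{(1-s)^2}\,ds, \qquad \gamma\in[0,1).
\end{equation*}

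With this identity the conclusion is immediate. For $\gamma\in[0,1)$ the desired inequality, after cancelling the constant $1$ and dividing by the strictly positive factor $(1-\gamma)$, is equivalent to
\begin{equation*}
\int_0^\gamma \frac{\mu_1(s)}{(1-s)^2}\,ds \ge \int_0^\gamma \frac{\mu_2(s)}{(1-s)^2}\,ds.
\end{equation*}
Since $\mu_1\fo\mu_2$ means exactly $\mu_1(s)\ge\mu_2(s)$ for all $s$, and the weight $(1-s)^{-2}$ is strictly positive on $[0,\gamma]$, this follows at once by monotonicity of the integral. Thus (\ref{maj1}) holds for all $\gamma$, and together with (\ref{maj2}) we conclude $\cT\mu_1\so\cT\mu_2$. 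I expect the only point demanding genuine care to be the interchange of integration and the Stieltjes integration by parts in the presence of atoms of $\mu$; restricting attention to $\gamma<1$ keeps every integrand bounded and every integral finite, so no integrability subtlety near the endpoint $\alpha=1$ arises. The hypothesis $\mu_i\in\cP_q$ is not needed for the estimates themselves; it serves only to guarantee that the functions $\cT\mu_i$ lie in $\Z^*$, which is where the relation $\so$ is defined.
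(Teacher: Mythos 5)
Your proof is correct and takes essentially the same route as the paper's: the same Fubini interchange yielding $\int_0^\gamma(\cT\mu)(t)\,dt=\int_0^\gamma\tfrac{\gamma-\alpha}{1-\alpha}\,d\mu(\alpha)$, the same Riemann--Stieltjes integration by parts producing the weight $(1-\gamma)/(1-\alpha)^2$, and the same pointwise comparison $\mu_1(\cdot)\ge\mu_2(\cdot)$ followed by subtraction from the total mass $1$.
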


\begin{proof}
By definition of $\cT$ and reversing the order of integration we can write for $\gamma\in (0,1)$,
\begin{align*}
\int_{0}^{\gamma}(\cT\mu_1)(t)d t  =\int_{0}^{\gamma}\int_{0}^{t}\frac{1}{1-\alpha}
d\mu_1(\alpha)\,dt =\int_{0}^{\gamma}\int_{\alpha}^{\gamma}
\frac{1}{1-\alpha}dt\,d\mu_1(\alpha) =\int_{0}^{\gamma}\frac{\gamma-\alpha}{1-\alpha}d\mu_1(\alpha).
\end{align*}
Now by Riemann-Stieltjes integration by parts
\begin{align*}
\int_{0}^{\gamma}(\cT\mu_1)(t)dt  =\left.\frac{\gamma-\alpha}{1-\alpha}\mu_1(\alpha)
\right|_{\alpha=0}^{\gamma}-\int_{0}^{\gamma}\mu_1(\alpha)
d\left(\frac{\gamma-\alpha}{1-\alpha}\right)=
\int_{0}^{\gamma}\mu_1(\alpha)\frac{1-\gamma}
{\left(1-\alpha\right)^{2}}d\alpha,
\end{align*}
where we used that $\mu_1(0^-)=\mu_2(0^-)=0$.
Since  $\mu_1\fo\mu_2$ we have  that $\mu_1(\cdot)\ge \mu_2(\cdot)$ and hence
\begin{equation*}
\int_0^\gamma(\cT\mu_1)(t)dt=
\int_{0}^{\gamma}\mu_1(\alpha)\frac{1-\gamma}
{\left(1-\alpha\right)^{2}}d \alpha \ge  \int_{0}^{\gamma}\mu_2(\alpha)
\frac{1-\gamma}{\left(1-\alpha\right)^{2}}d\alpha
=\int_0^\gamma(\cT\mu_2)(t)dt.
\end{equation*}
Because of $\int_0^1 (\cT\mu)(t)dt=1$, this implies that
\begin{equation*}
\int_\gamma^1(\cT \mu_1)(t)dt\le \int_\gamma^1(\cT\mu_2)(t)dt,
\end{equation*}
and hence  $\cT \mu_1 \so\, \cT \mu_2$.
\fb\end{proof}
\\

Let $\cM$ be a Kusuoka set and $\mu_1,\mu_2\in \cM$.
As it was pointed above, it follows from (\ref{soineq}) that if
 $\cT \mu_1 \so\, \cT \mu_2$, then measure $\mu_1$ can be removed from $\cM$. Hence it follows by Proposition~\ref{pr:orders} that if  $\mu_1\fo\,\mu_2$, then the measure $\mu_1$ can be removed from $\cM$.

\begin{definition}\label{def-domin}
A measure $\mu\in\cM\subset \cP$ is called a {\em maximal element}, a {\em maximal measure} or {\em non-dominated measure} of $(\cM,\fo)$, if there is no $\nu\in \cM$ satisfying $\mu\fos\nu$.
\end{definition}

The measures $\delta_0$ and $\delta_1$ are extremal in the sense that
\[\delta_0\fo\mu\fo\delta_1\qquad\text{for all }\mu\in\cP,\]
that is $\delta_1$ ($\delta_0$, resp.) is always a maximal (minimal, resp.) measure.

The next theorem elaborates that it is sufficient to consider the non-dominated measures within the closure of any Kusuoka set.
\begin{theorem}
\label{th:extmes}
Let $\cM$ be a Kusuoka set of law invariant coherent risk measure $\rho:\Z\to\bbr$, and $\overline\cM$ be its topological closure. Then the augmented set $\{\mu^\prime\in\cP\colon \mu^\prime\preccurlyeq\mu\;\text{\rm  for some }\mu\in\cM\}$ is a Kusuoka set as well. Furthermore, the set of extremal measures of $\overline\cM$, i.e.,  $\cM^\prime:=\{\mu\in\overline\cM: \nu  \not\succ\mu\,\text{ for all }\nu\in\cM\}$ is a Kusuoka set.
\end{theorem}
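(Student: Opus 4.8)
The plan is to establish both assertions by reducing them to a single elementary monotonicity fact and then, for the second one, to a compactness argument. The fact is that $\alpha\mapsto\avr_\alpha(Z)$ is monotonically nondecreasing—being the average of the nondecreasing quantile function $F_Z^{-1}$ over $[\alpha,1]$—and bounded below by $\avr_0(Z)=\bbe[Z]$. Hence for measures $\mu_1\fo\mu_2$ in $\cP$ the usual characterization of first order stochastic dominance through nondecreasing test functions gives $\int_0^1\avr_\alpha(Z)\,d\mu_1(\alpha)\le\int_0^1\avr_\alpha(Z)\,d\mu_2(\alpha)$ for every $Z\in\Z$, with the right-hand side finite as soon as $\mu_2$ lies in a Kusuoka set. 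This is exactly the measure-level content of Proposition \ref{pr:orders} combined with (\ref{soineq}), but phrased so that no integrability question for the dominated measure $\mu_1$ arises.

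For the first assertion set $\cN:=\{\mu'\in\cP:\mu'\fo\mu\ \text{for some }\mu\in\cM\}$. From $\mu\fo\mu$ we get $\cM\subseteq\cN$, so the supremum over $\cN$ is at least $\rho(Z)$. Conversely, given $\mu'\in\cN$ choose $\mu\in\cM$ with $\mu'\fo\mu$; the monotonicity inequality yields $\int\avr_\alpha(Z)\,d\mu'\le\int\avr_\alpha(Z)\,d\mu\le\rho(Z)$, so the supremum over $\cN$ is at most $\rho(Z)$ and in particular finite. The two bounds agree, so $\cN$ is a Kusuoka set. (If the relation in the augmented set is read as majorization of the spectral images $\cT\mu'\so\cT\mu$, the identical argument goes through with (\ref{soineq}) in place of the monotonicity inequality.)

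For the second assertion I would first invoke the fact, recalled in the introduction, that the closure $\overline\cM$ of a Kusuoka set is again a Kusuoka set. Since $\cM'\subseteq\overline\cM$, the supremum over $\cM'$ is at most $\rho(Z)$. For the reverse inequality it suffices to show that every $\mu_0\in\overline\cM$ satisfies $\mu_0\fo\bar\mu$ for some $\bar\mu\in\cM'$: the monotonicity inequality then gives $\int\avr_\alpha(Z)\,d\mu_0\le\int\avr_\alpha(Z)\,d\bar\mu$, and taking the supremum over $\mu_0\in\overline\cM$ recovers $\rho(Z)$ from the values on $\cM'$. To produce $\bar\mu$, fix $\mu_0$ and apply Zorn's lemma to $U(\mu_0):=\{\mu\in\overline\cM:\mu_0\fo\mu\}$. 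A maximal element $\bar\mu$ of $U(\mu_0)$ is maximal in all of $\overline\cM$—if $\bar\mu\fos\nu$ then $\mu_0\fo\nu$, contradicting maximality in $U(\mu_0)$—hence $\bar\mu\in\cM'$ and $\mu_0\fo\bar\mu$, as required.

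The only genuinely nontrivial point, and the one I expect to be the main obstacle, is verifying the hypothesis of Zorn's lemma: every $\fo$-chain $C\subseteq U(\mu_0)$ must have an upper bound inside $U(\mu_0)$. Here I would use that $\cP$, and hence its closed subset $\overline\cM$, is weakly compact by Prohorov's theorem, together with the fact that first order dominance is preserved under weak limits: if $\eta(\alpha)\ge\mu_\beta(\alpha)$ for all $\alpha$ and $\mu_\beta\to\nu$ weakly, then comparing distribution functions at the dense set of common continuity points and using right continuity gives $\eta(\alpha)\ge\nu(\alpha)$ for all $\alpha$, i.e.\ $\eta\fo\nu$. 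Viewing $C$ as a net directed by $\fo$ and passing to a weakly convergent subnet with limit $\bar\mu\in\overline\cM$, the tail of the net beyond any fixed $\eta\in C$ lies above $\eta$, so closedness of the order gives $\eta\fo\bar\mu$ for every $\eta\in C$ as well as $\mu_0\fo\bar\mu$; thus $\bar\mu\in U(\mu_0)$ is the sought upper bound. The remainder of the argument is only the monotonicity of $\avr_\alpha$ in $\alpha$.
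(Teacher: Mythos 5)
Your proof is correct and follows essentially the same strategy as the paper's: the first assertion via the monotonicity of $\alpha\mapsto\AVaR_\alpha(Z)$ combined with the nondecreasing-test-function characterization of first order dominance (the paper phrases this as a Riemann--Stieltjes integration by parts), and the second via Zorn's lemma applied to the upper cone $\{\nu\in\overline\cM:\mu_0\fo\nu\}$, whose maximal elements lie in $\cM'$. The one place where you genuinely diverge is the verification of the chain condition for Zorn: the paper constructs the upper bound of a chain $\cC$ explicitly as the pointwise infimum $\mu_\cC(x)=\inf_{\mu\in\cC}\mu(x)$ and checks right continuity and membership in $\overline\cC$ by a diagonal argument over a dense sequence, whereas you extract it as a weak subnet limit using compactness of $\cP$ (Prohorov) and closedness of $\fo$ under weak limits. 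Both work; your route outsources the hard part to compactness and is slightly less constructive, while also streamlining the paper's final $\varepsilon$-contradiction into the cleaner statement that every $\mu_0\in\overline\cM$ is dominated by some element of $\cM'$, and your explicit remark that the dominance inequality needs $\AVaR_\alpha(Z)$ only bounded below sidesteps an integrability point the paper leaves implicit.
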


\begin{proof}
Let $\mu^\prime\fo\mu\in\cM$.
As $\alpha\mapsto\AVaR_\alpha(Y)$ is a nondecreasing function and as $\mu(\cdot)\le\mu^\prime(\cdot)$ it follows by Riemann-Stieltjes integration by parts that
\begin{align*}
\int_{0}^{1}\AVaR_{\alpha}(Y)d\mu(\alpha)
 & =\left.\mu(\alpha)\AVaR_{\alpha}(Y)\right|_{\alpha=0_-}^1-\int_0^1\mu(\alpha)d\AVaR_{\alpha}(Y)\\
 & \ge\left.\mu^\prime(\alpha)\AVaR_{\alpha}(Y)\right|_{\alpha=0_-}^{1}-\int_0^1\mu^\prime(\alpha)d\AVaR_\alpha(Y)\\
 & =\int_0^1\AVaR_{\alpha}(Y)\mu^\prime(d\alpha),
\end{align*}
which is the first assertion.
\medskip

For the second assertion recall that $(\cM,\fo)$ is a partially ordered set and so is $(\overline\cM,\fo)$.
Consider a chain $\cC\subset\overline\cM$, that is for every $\mu$, $\nu$ it holds that $\mu\fo\nu$ or $\nu\fo\mu$ (totality).
Then the chain $\cC$ has an upper bound in $\overline\cC\subset\overline{\cM}$: to accept this (cf. the proof of Helly's Lemma in~\cite{vdVaart}) define $$\mu_\cC\left(x\right):=\inf_{\mu\in\cC}\mu(x)$$
(the upper bound), which is a positive, non-decreasing function satisfying
$\mu_\cC(1)=1$. As any $\mu\in\cC$ is right side continuous it is upper semi-continuous, thus $\mu_\cC$, as an infimum, is upper semi-continuous as well, hence $\mu_\cC$ is right side continuous and $\mu_\cC$ thus represents a measure, $\mu_\cC\in\cP$.
To show that $\mu_\cC\in\overline\cC$ let $x_i$ be a dense sequence in $[0,1]$ and choose $\mu_{i,n}\in\cC$ such that $\mu_{i,n}(x_i)<\mu_\cC(x_i)+2^{-n}$. As $\cC$ is a chain one may define $\mu_n:=\min\{\mu_{i,n}: i=1,2,\dots n\}$. It holds that $\mu_n(x_i)<\mu_\cC(x_i)+2^{-n}$ for all $i=1,2,\dots n$. As $x_i$ is dense, and as $\mu_n$, as well as $\mu_\cC$ are right side continuous it follows that $\mu_n\to\mu_\cC$ uniformly, hence $\mu_\cC\in\overline\cC$.

By Zorn's Lemma there is at least one maximal element $\mu^{*}$ in
$\overline{\cM}$, that is there is no element $\nu\in\overline{\cM}$
such that $\nu\succ\mu^*$. Hence
\begin{eqnarray*}
\mu^{*} & \in & \left\{ \mu\in\overline{\cM}:\;\neg\exists\nu\in\cM:\nu\succ\mu\right\} \\
 & = & \left\{ \mu\in\overline{\cM}:\;\forall\nu\in\cM:\nu\not\succ\mu\right\} =\cM^\prime
\end{eqnarray*}
and $\cM^\prime$ thus is a non-empty Kusuoka set.
Recall that $\cM^{\prime}\subset\overline{\cM}$,
hence
\begin{eqnarray*}
\rho\left(Y\right) & = & \sup_{\mu\in\cM}\int_{0}^{1}\AVaR_{\alpha}\left(Y\right)\mu\left(\mathrm{d}\alpha\right)\\
 & = & \sup_{\mu\in\overline{\cM}}\int_{0}^{1}\AVaR_{\alpha}\left(Y\right)\mu\left(\mathrm{d}\alpha\right) \ge  \sup_{\mu\in\cM^{\prime}}\int_{0}^{1}\AVaR_{\alpha}\left(Y\right)\mu\left(\mathrm{d}\alpha\right).
\end{eqnarray*}

To establish equality assume that $\rho\left(\cdot\right)\neq\sup_{\mu\in\cM^{\prime}}\int_{0}^{1}\AVaR_{\alpha}\left(\cdot\right)\mu\left(\mathrm{d}\alpha\right)$.
Then there is a random variable $Y$ satisfying
\begin{equation}
\rho\left(Y\right)>\sup_{\mu^{\prime}\in\cM^{\prime}}\int_{0}^{1}\AVaR_{\alpha}\left(Y\right)\mu^{\prime}\left(\mathrm{d}\alpha\right).\label{eq:1}
\end{equation}
For this $Y$ and for some $\varepsilon>0$ choose  $\mu\in\overline{\cM}$ such that
$\rho\left(Y\right)< \varepsilon+\int_{0}^{1}\AVaR_{\alpha}\left(Y\right)\mu\left(\mathrm{d}\alpha\right)$.

Consider the cone $\M_{\mu}:=\left\{ \nu\in\overline{\cM}:\mu\fo\nu\right\}$.
Notice that $\left(\cM_{\mu},\fo\right)$ again is a partially ordered set, for which Zorn's Lemma implies that there is a maximal element $\bar{\mu}\in\cM_{\mu}$ with respect to $\fo$, and it holds that $\mu\fo\bar\mu$ by construction.

We claim that $\bar{\mu}\in\cM^{\prime}$. Indeed, if it were
not, then there is $\nu^\prime\in\cM$ with $\nu^\prime\succ\bar{\mu}$.
But this means $\mu\fo\bar\mu\fos\nu^{\prime}$, so $\nu^{\prime}\in\cM_{\mu}$
and hence $\bar{\mu}\prec\nu^{\prime}$, contradicting the fact that $\bar{\mu}$
is a maximal measure in $\cM_{\mu}$, and hence $\bar{\mu}\in\cM^{\prime}$.

Now by Riemann-Stieltjes integration by parts
\begin{align*}
\rho\left(Y\right)-\varepsilon& <\int_{0}^{1}\AVaR_{\alpha}(Y)\mathrm{d}\mu(\alpha)\\
 & =\left.\mu(\alpha)\AVaR_{\alpha}(Y)\right|_{\alpha=0}^{1}-\int_{0}^{1}\mu(\alpha)\mathrm{d}\AVaR_{\alpha}(Y)\\
 & \le\left.\bar{\mu}(\alpha)\AVaR_{\alpha}(Y)\right|_{\alpha=0}^{1}-\int_{0}^{1}\bar{\mu}(\alpha)\mathrm{d}\AVaR_{\alpha}(Y)\\
 & =\int_{0}^{1}\AVaR_{\alpha}(Y)\bar{\mu}(\mathrm{d}\alpha),
\end{align*}
as $\alpha\mapsto\AVaR_{\alpha}(Y)$ is a non-decreasing function
and as $\mu\fo\bar{\mu}$, that is $\mu(\cdot)\ge\bar\mu(\cdot)$.

But as $\overline{\mu}\in\cM^\prime$ this is a contradiction to \eqref{eq:1}, such that the assertion   holds indeed.
\fb\end{proof}

\section{Examples}
\label{sec:examp}
\setcounter{equation}{0}

In order to demonstrate the minimal Kusuoka representation we have chosen two examples. The first is taken from \cite{DentPenevRusz}.
The second example elaborates the Kusuoka representation of the higher order semideviations.

\subsection{Higher order measures}
This first example generalizes the Average Value-at-Risk as introduced in \eqref{2k}.
\begin{proposition}\label{prop11} The minimal Kusuoka representation of the risk measure
\begin{eqnarray}\label{eq:47}
\rho(Z):=\inf_{\,t\in\mathbb R} \left\{t+c\, \Vert (Z-t)_+\Vert_p\right\},\;Z\in L_p(\O,\F,P),
\end{eqnarray}
($c> 1$ and $1<p<\infty$) is given by
\begin{eqnarray}\label{eq:23}
\rho(Z)=\sup \left\{\rho_\sigma(Z): \, \sigma\in\cS\text{ and }\Vert\sigma\Vert_q= c \right\},
\end{eqnarray}
where $\rho_\sigma$ is the spectral risk measure associated with the spectrum $\sigma$ (cf.~\eqref{spectr}) and $\frac 1 p+\frac 1 q=1$.
\end{proposition}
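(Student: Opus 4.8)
The plan is to compute the dual set $\cA$ of $\rho$ explicitly, identify its exposed points, intersect with the monotone functions, and then read off the minimal generating set via Corollary~\ref{cor-1} (together with Proposition~\ref{pr-gen} and Theorem~\ref{th-uniq}). First I would check that $\rho$ in \eqref{eq:47} is a real valued law invariant coherent risk measure: axioms (A1)--(A4) follow from the corresponding properties of $t\mapsto t$ and $Z\mapsto\|(Z-t)_+\|_p$, and finiteness holds on $L_p$. I then claim the dual set in \eqref{7k} is
\[
\cA=\{\zeta\in \Z^*:\ \zeta\ge 0,\ \bbe[\zeta]=1,\ \|\zeta\|_q\le c\}.
\]
The inclusion $\langle\zeta,Z\rangle\le\rho(Z)$ for every such $\zeta$ is immediate: for any $t$, since $\zeta\ge 0$ and $Z-t\le (Z-t)_+$, Hölder gives $\langle\zeta,Z\rangle=\langle\zeta,Z-t\rangle+t\le\langle\zeta,(Z-t)_+\rangle+t\le \|\zeta\|_q\|(Z-t)_+\|_p+t\le c\|(Z-t)_+\|_p+t$, and minimizing over $t$ yields the bound. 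For the reverse inclusion, fix $Z$, let $t^*$ attain the infimum in \eqref{eq:47} (finite, since $c>1$ makes the objective coercive), and set $\zeta^*:=c\,(Z-t^*)_+^{\,p-1}/\|(Z-t^*)_+\|_p^{\,p-1}$. Using $(p-1)q=p$, $\tfrac{p-1}{p}=\tfrac1q$, and the first order condition $c\,\bbe[(Z-t^*)_+^{p-1}]=\|(Z-t^*)_+\|_p^{p-1}$ in $t$, one checks $\zeta^*\ge 0$, $\bbe[\zeta^*]=1$, $\|\zeta^*\|_q=c$, and $\langle\zeta^*,Z\rangle=t^*+c\|(Z-t^*)_+\|_p=\rho(Z)$.

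Next I would determine $\ex(\cA)$ by running the chain of inequalities backwards. For nonconstant $Z$, any maximizer $\zeta$ of $\langle\cdot,Z\rangle$ over $\cA$ must turn all three inequalities into equalities: $\zeta=0$ on $\{Z<t^*\}$ (positivity step), $\zeta^q\propto (Z-t^*)_+^{\,p}$ (Hölder step, which by strict convexity of the $L_q$ norm for $1<q<\infty$ forces genuine proportionality), and $\|\zeta\|_q=c$ (norm step). These pin down $\zeta=\zeta^*$ uniquely, so $Z$ exposes $\cA$ at $\zeta^*$, whence $\ex(\cA)=\{\zeta\in\cA:\|\zeta\|_q=c\}$. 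Conversely, given $\sigma\in\cS$ with $\|\sigma\|_q=c$, I would take $Z:=\sigma^{\,q-1}$, which is nonnegative and nondecreasing with $Z^{p-1}=\sigma$ and $\|Z\|_p^{p-1}=c$; with $t^*=0$ the construction returns $\zeta^*=\sigma$, so $\sigma\in\ex(\cA)$. Since $(Z-t^*)_+^{\,p-1}$ is nondecreasing whenever $Z=F_Z^{-1}$ is, the monotone maximizers are exactly the spectral functions of norm $c$, so by Definition~\ref{def:monexp},
\[
\cU=\cS\cap\ex(\cA)=\{\sigma\in\cS:\ \|\sigma\|_q=c\}.
\]

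Finally I would invoke Corollary~\ref{cor-1}: $\cU$ is a generating set and $\bar\cU$ is the minimal one, so by \eqref{genrepr} and \eqref{spectr},
\[
\rho(Z)=\sup_{\sigma\in\cU}\int_0^1\sigma(\tau)F_Z^{-1}(\tau)\,d\tau=\sup\{\rho_\sigma(Z):\sigma\in\cS,\ \|\sigma\|_q=c\},
\]
which is \eqref{eq:23}; Proposition~\ref{pr-gen} and Theorem~\ref{th-uniq} then translate this into the minimal Kusuoka set $\cT^{-1}(\bar\cU)$. The main obstacle is the exposed point analysis: tracking the equality cases across the positivity, Hölder, and norm inequalities simultaneously, and in particular using strict convexity of the $L_q$ norm (valid precisely because $1<p,q<\infty$) to upgrade Hölder equality to exact proportionality and hence to a unique maximizer; degenerate cases (constant $Z$, and the behaviour of $\sigma$ on the initial interval where it vanishes) need separate care. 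A minor point is the weak$^*$ closure: the sphere $\{\sigma\in\cS:\|\sigma\|_q=c\}$ need not be weak$^*$ closed, but it coincides with $\cU$ and therefore already delivers the representation \eqref{eq:23}, with $\bar\cU$ supplying the genuinely minimal generating set.
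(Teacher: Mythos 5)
Your proof is correct, and at the decisive step it coincides with the paper's: both arguments expose a norm-$c$ density $\sigma$ by testing against $Z=\sigma^{q-1}$ and invoking the equality case of H\"older's inequality, which is exactly where $1<p<\infty$ enters, and both then appeal to the general minimality machinery (Proposition~\ref{pr-rep}, Corollary~\ref{cor-1}, Theorem~\ref{th-uniq}). You differ in two sub-arguments. First, the paper simply cites \cite{DentPenevRusz} for the dual representation \eqref{dent} over the ball $\{\sigma\in\cS:\Vert\sigma\Vert_q\le c\}$, whereas you derive the dual set from scratch via the optimal $t^*$ and the explicit maximizer $\zeta^*=c\,(Z-t^*)_+^{p-1}/\Vert(Z-t^*)_+\Vert_p^{p-1}$; this makes the argument self-contained and, as a by-product, already shows that every maximizer lies on the sphere $\Vert\zeta\Vert_q=c$. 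Second, to pass from the ball to the sphere the paper uses a majorization argument (the construction $\sigma_\Delta$ with $\sigma\so\sigma_\Delta$ and an intermediate-value argument to hit norm $c$, then \eqref{soineq}), while you obtain the same reduction for free from your equality-case analysis; your route moreover establishes the reverse inclusion $\ex(\cA)\subset\{\zeta\in\cA:\Vert\zeta\Vert_q=c\}$, which the paper asserts ("$\cC=\ex(\cA)$") but only proves in the direction $\cC\subset\ex(\cA)$. The degenerate cases you flag are genuinely harmless: coercivity of $t\mapsto t+c\Vert(Z-t)_+\Vert_p$ for $c>1$ gives attainment of $t^*$, and a constant $Z$ exposes no point at all, so it does not perturb the identification of $\ex(\cA)$. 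Net effect: same theorem, same key computation, but your write-up is more self-contained and slightly more complete on the converse inclusion, at the cost of redoing the duality that the paper imports by citation.
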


\begin{rem}
{\rm
Equation
\eqref{eq:23} is not a Kusuoka representation in its original form, but it is a concise way of writing $$\rho_\sigma(Z)=\int_0^1 \sigma(u)F_Z^{-1}(u)du= \int_0^1 \AVaR_\alpha(Z)d\mu(\alpha),$$ where $\mu=\cT^{-1}\sigma$ according to \eqref{3k} and \eqref{spectr}.
}
\end{rem}
\begin{rem}
{\rm
It is obvious that \eqref{eq:23} represents the Average Value-at-Risk, whenever $c=\frac 1{1-\alpha}$ and $p=1$, such that Proposition~\ref{prop11} generalizes the Average Value-at-Risk.
}
\end{rem}
\begin{proof}
The Kusuoka representation of \eqref{eq:47} is  given in \cite{DentPenevRusz} in the form
\begin{eqnarray}\label{dent}
\rho(Z)=\sup \left\{\rho_\sigma(Z): \, \sigma\in\cS \text{ and }\Vert\sigma\Vert_q\le c \right\},
\end{eqnarray}
i.e., the dual set  of $\rho$ is $\cA=\OO\left\{\sigma\in\cS: \Vert\sigma\Vert_q\le c \right\}$ (of course, the condition $\Vert\sigma\Vert_q\le c$ implies that the set $\cA$ is a subset of the dual space $L_q(\O,\F,P)$).
We prove first that it is enough to consider functions $\sigma$ satisfying $\Vert\sigma\Vert_q=c$. Indeed, for $\sigma$ satisfying $\Vert\sigma\Vert_q<c$ define $\sigma_\Delta(u):=\ind_{[\Delta,1]}(u)\cdot\left(\sigma(u)+ \frac 1{1-\Delta}\int_0^\Delta \sigma(u^\prime)du^\prime\right)$ ($\Delta\in[0,1)$). It is evident that $\Delta\mapsto\Vert\sigma_\Delta\Vert_q$ is a continuous and unbounded function, hence there is a $\Delta_0$ such that $\Vert\sigma_{\Delta_0}\Vert=c$. Moreover $\sigma\so \sigma_\Delta$ by construction, such that one may pass~-- according to \eqref{soineq}~-- to $\sigma_{\Delta_0}$ without changing the objective in \eqref{dent} to get the representation (\ref{eq:23}).

The set in the dual corresponding to \eqref{eq:23} is
$$\cC=\OO\left(\left\{\zeta\in \cS:  \, \Vert\zeta\Vert_q= c\right\}\right).$$

Now let $\zeta\in\cC\subset\cA$ be chosen, and consider  the random variable $Z:=\zeta^{q-1}$.
The function  $g_Z(\cdot):= \lan \cdot, Z \ran$ (defined in Definition~\ref{def:expos}) attains its maximum over $\cA$ at $\zeta$, because $$g_Z\left(\zeta^\prime\right)\le \Vert\zeta^\prime\Vert_q\cdot\Vert Z\Vert_p\le c \Vert Z\Vert_p=c \Vert\zeta\Vert_q^{p/q}= c^q=\Vert\zeta\Vert_q^q= g_Z(\zeta)$$  by H\"older's inequality whenever $\zeta^\prime\in\cA$. Moreover $\zeta$ is the unique maximizer, as $1<p<\infty$. This shows, using the notation from Definition~\ref{def:expos}, that $\cC$ collects just maximizers, that is $\cC=\ex(\cA)$. This completes  the proof.
\fb\end{proof}

\subsection{Higher order semideviation}

Our second example addresses the $p-$semideviation risk measure. We elaborate its Kusuoka representation, as well as the minimal Kusuoka representation.
\begin{proposition}
For $p\ge1$ the Kusuoka representation of the $p-$semideviation
\[
\rho\left(Z\right):=\mathbb{E}[Z]+\lambda \left\Vert \left(Z-\mathbb{E}Z\right)_{+}\right\Vert _p,\;Z\in L_p(\O,\F,P),
\]
($0\le\lambda\le 1$) is
\begin{equation}\label{eq:3}
\rho\left(Z\right)=\sup_{\sigma\in\cS}\left(1-\tfrac\lambda{\left\Vert \sigma\right\Vert _q}\right)\mathbb{E}[Z]+\tfrac\lambda{\left\Vert \sigma\right\Vert _{q}}\rho_\sigma\left(Z\right).
\end{equation}
The representation \eqref{eq:3} is moreover the  {\em minimal} Kusuoka representation whenever $p>1$.
\end{proposition}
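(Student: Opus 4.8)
The plan is to recover the dual set $\cA$ of \eqref{7k} explicitly, read off the monotone (spectral) elements, verify they give \eqref{eq:3}, and then use Corollary~\ref{cor-1} and Theorem~\ref{th-uniq} to check that the resulting generating set is the minimal one $\bar\cU$. The argument parallels the Hölder/exposed-point strategy used for the preceding higher-order example.

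I would begin with the variational identity
\[
\left\Vert (Z-\mathbb E Z)_+\right\Vert_p=\sup\left\{\langle h,\,Z-\mathbb E Z\rangle:\ h\ge 0,\ \Vert h\Vert_q\le 1\right\},
\]
whose maximizer is $h\propto (Z-\mathbb E Z)_+^{\,p-1}$ by Hölder's inequality. Inserting this into the definition of $\rho$ and using $\langle h,Z-\mathbb E Z\rangle=\langle h-\mathbb E h,Z\rangle$ yields the dual representation $\rho(Z)=\sup_{\zeta\in\cA}\langle\zeta,Z\rangle$ with
\[
\cA=\left\{1+\lambda(h-\mathbb E h):\ h\ge 0,\ \Vert h\Vert_q\le 1\right\},
\]
a weak$^*$ compact convex set of densities (every $\zeta\in\cA$ has $\mathbb E\zeta=1$).

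To obtain \eqref{eq:3}, by law invariance I replace $Z$ by its quantile function and set $W:=F_Z^{-1}-\mathbb E Z$, which is monotone nondecreasing with $\int_0^1 W=0$. Since $W$ is monotone, the optimal direction $g^\ast\propto W_+^{\,p-1}$ is also monotone nondecreasing, so $\Vert W_+\Vert_p$ is already attained over monotone $g\ge 0$ with $\Vert g\Vert_q=1$. For such $g$ put $\sigma:=g/\mathbb E g\in\cS$ and $\tilde\sigma:=1+\tfrac{\lambda}{\Vert\sigma\Vert_q}(\sigma-1)=1+\lambda(g-\mathbb E g)$; a short check gives $\tilde\sigma\ge 0$ (using $\mathbb E g\le\Vert g\Vert_q=1$ and $\lambda\le 1$), so $\tilde\sigma\in\cS$, and $\rho_{\tilde\sigma}(Z)$ is exactly the summand of \eqref{eq:3}. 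Rewriting the right-hand side of \eqref{eq:3} as $\mathbb E Z+\lambda\sup_g\langle g,W\rangle=\mathbb E Z+\lambda\Vert W_+\Vert_p$ then recovers the definition of $\rho$, proving \eqref{eq:3}. In the process one sees that $\Upsilon:=\{\tilde\sigma:\sigma\in\cS\}$ is precisely the set $\cS\cap\cA$ of monotone elements of $\cA$, and is a generating set in the sense of Definition~\ref{def-genset}.

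For minimality with $p>1$ it suffices, by Corollary~\ref{cor-1} and Theorem~\ref{th-uniq}, to show $\Upsilon=\bar\cU$. First, $\Upsilon=\cS\cap\cA$ is weak$^*$ closed (monotonicity, nonnegativity and $\int_0^1(\cdot)=1$ all survive weak$^*$ limits, and $\cA$ is closed), so Corollary~\ref{cor-1} already gives $\bar\cU\subseteq\Upsilon$. For the reverse inclusion I must place each $\tilde\sigma$ in $\overline{\ex(\cA)}$. When $g$ vanishes on an initial interval $[0,a)$, I build an exposing $Z$ by prescribing $V:=F_Z^{-1}-\mathbb E Z$ to equal a negative constant on $[0,a)$ and $c\,g^{q-1}$ on $[a,1]$, with the two constants fixed by $\int_0^1 V=0$; this $V$ is monotone nondecreasing and has $V_+\propto g^{q-1}$, so, using $(q-1)(p-1)=1$, the unique Hölder maximizer of $\langle h,V\rangle$ over $\{h\ge 0,\ \Vert h\Vert_q\le 1\}$ is exactly $g$. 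Hence $Z$ exposes $\cA$ at $\tilde\sigma$ and $\tilde\sigma\in\ex(\cA)$. The remaining functions (those with $g>0$ near $0$, including $\sigma\equiv 1$, which corresponds to $\mathbb E$) are weak$^*$ limits of such exposed points as $a\downarrow 0$, so $\Upsilon\subseteq\overline{\ex(\cA)}$; with $\Upsilon\subseteq\cS$ this gives $\Upsilon\subseteq\bar\cU$, hence $\Upsilon=\bar\cU$, and Theorem~\ref{th-uniq} identifies $\cT^{-1}(\Upsilon)$ as the minimal Kusuoka set. The main obstacle is this exposing construction: producing a genuine monotone, mean-zero quantile deviation $V$ with $V_+\propto g^{q-1}$ and then invoking strict convexity of $L_q$ to get a \emph{unique} maximizer. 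That uniqueness is precisely where $p>1$ is used; for $p=1$ (so $q=\infty$) the ball of $L_q$ is not strictly convex, the $\tilde\sigma$ are no longer exposed, and \eqref{eq:3} fails to be minimal. A secondary point to handle with care is the approximation argument for the non-exposed boundary functions with $g(0)>0$ and the verification that $\Upsilon$ is genuinely weak$^*$ closed.
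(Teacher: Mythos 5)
Your proposal is correct and follows essentially the same route as the paper: H\"older duality to identify the dual set $\cA=\{1+\lambda(h-\mathbb{E}h):h\ge 0,\ \Vert h\Vert_q\le 1\}$, rewriting its monotone elements as the generating set realizing \eqref{eq:3}, and then exposed points together with Corollary~\ref{cor-1} and Theorem~\ref{th-uniq} for minimality when $p>1$. Your one genuine refinement is welcome: where the paper asserts outright that the whole candidate set equals $\ex(\cA)$, you correctly observe that only the $\tilde\sigma$ arising from $g$ vanishing on an initial interval are genuinely exposed (a $g$ that is positive a.e.\ cannot be proportional to $(Z-\mathbb{E}Z)_+^{p-1}$ for a mean-zero monotone deviation), and you recover the remaining elements by the $a\downarrow 0$ closure argument, which is all that is needed since minimality only requires $\Upsilon\subseteq\overline{\ex(\cA)}$.
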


\begin{rem}
{\rm
As in the first example the formula presented in (\ref{eq:3}) does not appear as a Kusuoka representation in its traditional form. To make the Kusuoka representation evident we rewrite it as
\begin{equation}\label{eq:18}
\rho(Z)= \sup_{\mu \in \cP_q}\left\{\left(1-\tfrac\lambda{\left\Vert \cT\mu\right\Vert _q}\right)\mathbb{E}[Z]+\tfrac\lambda{\left\Vert \cT\mu\right\Vert _q}\int_0^1 \AVaR_\alpha(Z)d\mu(\alpha)\right\},
\end{equation}
such that the supremum in the representation (\ref{eq:18})  is among all measures of the form
\begin{eqnarray*}
\left(1-\tfrac\lambda{\left\Vert \cT\mu\right\Vert _q}\right)\delta_0+\tfrac\lambda{\left\Vert \cT\mu\right\Vert _q}\mu,\;\; \mu\in\cP_q,
\end{eqnarray*}
which finally provides a Kusuoka representation.
}
\end{rem}

\begin{rem}
{\rm
Notice that $\left\Vert \sigma\right\Vert _{q}\ge\left\Vert \sigma\right\Vert _{1}=\int_{0}^{1}\sigma(u)\mathrm{d}u=1$,
such that $1-\frac\lambda{\Vert \sigma \Vert_q}\ge 0$ and \eqref{eq:3} is indeed a positive quantity whenever  $0\le\lambda\le1$.
}
\end{rem}

\begin{proof}
By H\"older's duality $\mathbb{E}[Z\zeta]\le\left(\mathbb{E}
\left|Z\right|^{p}\right)^{\frac{1}{p}}
\left(\mathbb{E}\left|\zeta\right|^{q}\right)^{\frac{1}{q}}$
and $\left(\mathbb{E}\left|Z\right|^{p}\right)^{\frac{1}{p}}=\sup\left\{ \mathbb{E}[Z \zeta]\colon\left\Vert \zeta\right\Vert _{q}\le1\right\} $.
Clearly
\[
\left(\mathbb{E}[Z_{+}^{p}]\right)^{\frac{1}{p}}=\sup\left\{ \mathbb{E}[Z \zeta]\colon \zeta\ge0,\left\Vert \zeta\right\Vert _{q}\le1\right\} ,
\]
the supremum being attained at $\zeta=\frac{Z_{+}^{p-1}}{\left\Vert Z_{+}^{p-1}\right\Vert _{q}}$.
It follows that
\begin{equation}
\left\Vert \left(Z-\mathbb{E}[Z]\right)_{+}\right\Vert _{p}=\sup\left\{ \mathbb{E}[\left(Z-\mathbb{E}[Z]\right)\zeta]\colon \zeta\ge0,\left\Vert \zeta\right\Vert _{q}\le1\right\} .\label{eq:5}
\end{equation}

Now
\begin{eqnarray}
\rho(Z) & = & \mathbb{E}[Z]+\lambda\left\Vert \left(Z-\mathbb{E}[Z]\right)_{+}\right\Vert _{p}\nonumber \\
 & = & \sup\left\{ \mathbb{E}[Z]+\lambda\,\mathbb{E}[\left(Z-\mathbb{E}[Z]\right)
 \zeta]\colon \zeta\ge0,\left\Vert \zeta\right\Vert _{q}\le1\right\} \nonumber \\
 & = & \sup\left\{ \left(1-\lambda\,\mathbb{E}[\zeta]\right)\mathbb{E}[Z]
 +\lambda\,\mathbb{E}[Z \zeta]\colon \zeta\ge0,\left\Vert \zeta\right\Vert _{q}\le1\right\} \label{eq:21} \\
 & = & \sup\left\{ \left(1-\lambda\tfrac{\mathbb{E}[\zeta]}{\left\Vert \zeta\right\Vert _{q}}\right)\mathbb{E}[Z]+\lambda\,\mathbb{E}
 \left[Z\tfrac{\zeta}{\left\Vert \zeta\right\Vert _{q}}\right]\colon \zeta\ge0\right\} \nonumber \\
 & = & \sup\left\{ \left(1-\tfrac\lambda{\left\Vert \zeta\right\Vert _{q}}\right)\mathbb{E}[Z] +\tfrac{\lambda}{\left\Vert \zeta\right\Vert _{q}}\mathbb{E}[Z \zeta]\colon \zeta\ge0,\,\mathbb{E}[\zeta]=1\right\} .\label{eq:4}
\end{eqnarray}

For $\zeta$ feasible in \eqref{eq:4} define the function $\sigma_\zeta\left(u\right):=\VaR_{u}\left(\zeta\right)$
and observe that
\[
\left\Vert \sigma_\zeta\right\Vert _q^q=\int_0^1\sigma_\zeta^q(\alpha)\mathrm{d}\alpha
=\mathbb{E}[\zeta^q]=\left\Vert \zeta\right\Vert_q^q.
\]
Now notice that every random variable $\zeta$ in \eqref{eq:4} is given
by $\zeta=\sigma\left(U\right)$ for a uniform $U$ where $\sigma$ is
non-negative, non-decreasing and $\int_0^1\sigma\left(u\right)\mathrm{d}u=1$
-- that is, $\sigma$ is a spectral function. It follows that
\begin{eqnarray}
\rho\left(Z\right) & = & \sup_{\sigma\in\cS}\sup\left\{ \left(1-\tfrac{\lambda}{\left\Vert \zeta\right\Vert _{q}}\right)\mathbb{E}[Z]+\tfrac{\lambda}{\left\Vert \zeta\right\Vert _{q}}\mathbb{E}[Z \zeta]\colon \zeta\ge0,\,\mathbb{E}[\zeta]=1,\,\VaR\left(\zeta\right)=
\sigma\right\}\nonumber \\
 & = & \sup_{\sigma\in\cS}\sup\left\{ \left(1-\tfrac{\lambda}{\left\Vert \sigma\right\Vert _{q}}\right)\mathbb{E}[Z]+\tfrac{\lambda}{\left\Vert \sigma\right\Vert _{q}}\mathbb{E}[Z \zeta]\colon \zeta\ge0,\,\mathbb{E}[\zeta]=1,\, \VaR\left(\zeta\right)=\sigma\right\} \label{eq:20} \\
 & = & \sup_{\sigma\in\cS}\left(1-\tfrac{\lambda}{\left\Vert \sigma\right\Vert _{q}}\right)\mathbb{E}[Z]+\tfrac{\lambda}{\left\Vert \sigma\right\Vert _{q}}\rho_{\sigma}\left(Z\right). \nonumber
\end{eqnarray}
This completes the proof of the Kusuoka representation.

In order to verify that this is the minimal Kusuoka representation consider the corresponding set $\cC\subset\cA$ in the dual, which is in view of \eqref{eq:21}
$$\cC=\left\{\zeta= (1-\lambda\,\mathbb E [\zeta^\prime])\,\ind +\lambda \zeta^\prime: \Vert\zeta^\prime\Vert_q=1, \, \zeta^\prime\ge 0\right\}$$
(cf.~\cite[Example 6.20]{SDR}). For $Z\in L_p(\O,\F,P)$ consider the function $g_Z(\zeta):=\lan \zeta, Z \ran$ as in Definition~\ref{def:expos}.
It holds that $$\sup_{\zeta\in\cA}g_Z\left(\zeta\right)=\sup_{\zeta^\prime\ge 0,\,\Vert \zeta^\prime\Vert_q\le 1}(1-\lambda\,\mathbb E[\zeta^\prime])\mathbb E[Z]+\lambda\mathbb E[\zeta^\prime Z]=\mathbb E[Z]+\lambda \sup_{\Vert \zeta^\prime\Vert_q\le 1} \mathbb E[\zeta^\prime(Z-\mathbb E[Z])].$$
For $1<p<\infty$ the latter supremum is {\em uniquely} attained at $$\bar\zeta^\prime=\frac{\left(Z-\mathbb E [Z]\right)_+^{p-1}}{\Vert\left(Z-\mathbb E [Z]\right)_+^{p-1}\Vert_q},$$ such that $g_Z$ attains its {\em unique} maximum at \begin{eqnarray}
\bar\zeta=(1-\lambda \mathbb E [\bar\zeta^\prime])\ind+\lambda\bar\zeta^\prime.
\end{eqnarray}
It follows that $\cC$ consists of exposed points only, that is $\cC=\ex(\cA)$. This proves that $\cC$ is the minimal Kusuoka representation, and thus \eqref{eq:3}.
\fb\end{proof}
\begin{cor}[cf.~\cite{mor12}]
For $p=1$ the minimal Kusuoka representation of the absolute semideviation
$\rho\left(Z\right):=\mathbb{E}[Z]+
\lambda\,\mathbb{E}\left[\left(Z-\mathbb{E}[Z]\right)_+\right]$, with $\lambda\in[0,1]$,
is
\begin{equation}
\rho\left(Z\right)=\sup_{\kappa\in\left[0,1\right]}\left\{
\left(1-\lambda \kappa\right)\AVaR_0+
\lambda \kappa\,\AVaR_{1-\kappa}\left(Z\right)\right\}.
\label{eq:3-2}
\end{equation}
\end{cor}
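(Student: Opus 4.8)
The plan is to read off the corollary from the $p=1$ specialisation of the preceding Proposition, for which $q=\infty$. The representation \eqref{eq:3} itself already holds for every $p\ge1$; what is genuinely new here is \emph{minimality}, which that Proposition established only for $1<p<\infty$ by invoking the \emph{unique} attainment in H\"older's inequality. Since $L_\infty$ is not strictly convex that argument is unavailable, so the core of the work is to determine $\ex(\cA)$ directly when $q=\infty$.

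First I would specialise \eqref{eq:21}. For $q=\infty$ the constraint $\zeta'\ge0,\ \|\zeta'\|_q\le1$ becomes $0\le\zeta'\le1$ a.e., so the dual set is the affine image
\[
\cA=\left\{(1-\lambda\,\mathbb{E}[\zeta'])\,\ind+\lambda\,\zeta'\colon 0\le\zeta'\le1\right\},
\]
and for $Z\in L_1(\O,\F,P)$ the exposing functional of Definition~\ref{def:expos} reads
\[
g_Z(\zeta)=\langle\zeta,Z\rangle=\mathbb{E}[Z]+\lambda\,\mathbb{E}\!\left[\zeta'\,(Z-\mathbb{E}[Z])\right].
\]

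Next I would characterise the maximisers of $g_Z$ over $\cA$. Since the integrand is linear in $\zeta'$ with $0\le\zeta'\le1$, pointwise optimisation forces $\zeta'=1$ on $\{Z>\mathbb{E}[Z]\}$ and $\zeta'=0$ on $\{Z<\mathbb{E}[Z]\}$, leaving $\zeta'$ free on the level set $\{Z=\mathbb{E}[Z]\}$. Hence the maximiser is unique exactly when $P(Z=\mathbb{E}[Z])=0$, in which case $\zeta'=\ind_{\{Z>\mathbb{E}[Z]\}}$ is the indicator of a set of measure $\kappa:=P(Z>\mathbb{E}[Z])$; conversely each $A$ with $0<P(A)<1$ is exposed by $Z:=\ind_A$, whose level set is null. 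Thus
\[
\ex(\cA)=\left\{(1-\lambda\kappa)\,\ind+\lambda\,\ind_A\colon A\in\F,\ P(A)=\kappa\in(0,1)\right\}.
\]
By Proposition~\ref{pr:inv} the nondecreasing distributional equivalent of each such point is again exposed, and replacing $\ind_A$ by its quantile function $\ind_{[1-\kappa,1]}$ gives
\[
\cU=\cS\cap\ex(\cA)=\left\{\sigma_\kappa:=(1-\lambda\kappa)+\lambda\,\ind_{[1-\kappa,1]}\colon\kappa\in(0,1)\right\},
\]
each $\sigma_\kappa$ being a spectral function. By Corollary~\ref{cor-1} and Theorem~\ref{th-uniq}, $\bar\cU$ is the minimal generating set and $\cT^{-1}(\bar\cU)$ the minimal Kusuoka set; the omitted endpoints $\kappa\in\{0,1\}$ reappear in the closure and both collapse to the constant spectrum, i.e.\ to $\AVaR_0(Z)=\mathbb{E}[Z]$.

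Finally I would evaluate the surviving spectra. Using \eqref{2k} in the form $\int_{1-\kappa}^1 F_Z^{-1}(t)\,dt=\kappa\,\AVaR_{1-\kappa}(Z)$ together with $\int_0^1F_Z^{-1}=\mathbb{E}[Z]$,
\[
\rho_{\sigma_\kappa}(Z)=\int_0^1\sigma_\kappa(t)F_Z^{-1}(t)\,dt=(1-\lambda\kappa)\,\mathbb{E}[Z]+\lambda\kappa\,\AVaR_{1-\kappa}(Z),
\]
so that, since $\cU$ is a generating set, $\rho(Z)=\sup_{\kappa\in(0,1)}\rho_{\sigma_\kappa}(Z)$, and closing the parameter interval (the endpoints contributing only the value $\mathbb{E}[Z]$) yields \eqref{eq:3-2}. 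The main obstacle is precisely this exposedness analysis over the non-strictly-convex space $L_\infty$: one must argue that a non-null level set $\{Z=\mathbb{E}[Z]\}$ destroys uniqueness, so that only the indicator perturbations are exposed, while every $\zeta'$ taking values strictly inside $(0,1)$ on a positive-measure set survives merely in the closure. The remaining steps are the routine spectral-to-$\AVaR$ bookkeeping already set up in Section~\ref{sec:Kusuoka}.
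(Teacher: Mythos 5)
Your proof is correct and follows essentially the same route as the paper: identify that the optimal dual densities are of indicator type (equivalently $\zeta'=\ind_A$), pass to their monotone rearrangements $\tfrac{1}{1-\alpha}\ind_{[\alpha,1]}$, and read off the two-point measures $(1-\lambda\kappa)\delta_0+\lambda\kappa\,\delta_{1-\kappa}$. You are in fact somewhat more thorough than the paper on the minimality claim: the paper's proof only records where the suprema in \eqref{eq:5} and \eqref{eq:4} are attained and then asserts that the closure is the minimal Kusuoka set, whereas you explicitly check that each $\sigma_\kappa$, $\kappa\in(0,1)$, is exposed (by $Z=\ind_A$) and that no density taking values strictly inside $(0,1)$ on a non-null set can be exposed, which is precisely what is needed to invoke Corollary~\ref{cor-1} and Theorem~\ref{th-uniq}.
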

\begin{proof}
Note that the supremum in \eqref{eq:5} is attained at $\zeta=\frac{\left(Z-\mathbb{E}[Z]\right)_{+}^{0}}{\left\Vert \left(Z-\mathbb{E}[Z]\right)_{+}^{0}\right\Vert _{\infty}}$,
and in \eqref{eq:4} thus at $\zeta=\frac{\left(Z-\mathbb{E}[Z]\right)_{+}^{0}}{\left\Vert \left(Z-\mathbb{E}[Z]\right)_{+}^{0}\right\Vert _{1}}$,
which is a function of type $\zeta=\frac{\ind_B}{P(B)}$ (choose $B=\left\{ Z>\mathbb{E}[Z]\right\} $
to accept the correspondence). Next observe that $\sigma_\zeta(u)=\VaR_{u}(\zeta)=\frac{1}{1-\alpha}\ind_{\left[\alpha,1\right]}\left(u\right)$
(for $\alpha=P\left(B^{\complement}\right)$), which is the spectral
function of the Average Value-at-Risk of level $\alpha$. It follows
that
\begin{eqnarray*}
\rho\left(Z\right) & = & \sup_{\sigma=\frac{1}{1-\alpha}\,
\ind_{\left[\alpha,1\right]}}\left\{\left(1-\frac{\lambda}{\left\Vert \sigma\right\Vert _{\infty}}\right)\mathbb{E}[Z]+\frac{\lambda}{\left\Vert \sigma\right\Vert _{\infty}}\rho_{\sigma}\left(Z\right)\right\},\\
 & = & \sup_{\alpha}\left\{\left(1-\lambda\left(1-\alpha\right)\right)\mathbb{E}[Z]
 +\lambda\left(1-\alpha\right)\AVaR_{\alpha}\left(Y\right)\right\},\\
 & = &\sup_{\kappa\in (0,1)}\left\{\left(1-\lambda\kappa\right)\mathbb{E}[Z]
 +\lambda\kappa\,\AVaR_{1-\kappa}\left(Z\right)\right\}.
\end{eqnarray*}
That is, the set
$
\cM:=\cup_{\kappa\in (0,1)}\left\{(1-\lambda \kappa)\delta_0+\lambda\kappa \delta_{1-\kappa}\right\}
$
is a Kusuoka set and its closure is the minimal Kusuoka set.
\fb
\end{proof}

\section{Conclusion}
\label{sec:concl}
In this paper we address the Kusuoka representation of law invariant coherent  risk measures,  introduced in \cite{Kusuoka}. In general many representations may describe the same risk measure, but we demonstrate that there is a minimal representation available, and this minimal representation is moreover unique.

The minimal representation can be extracted by identifying the set of exposed points of the dual set and then applying the corresponding one-to-one transformation. Moreover, it turns out that the minimal representation only consists of  measures   which are nondominated in first order stochastic dominance. This relation, as well as the convex  order stochastic dominance relation can be employed to identify the necessary measures. This is demonstrated in   two considered examples.

We also consider  Kusuoka representations on general probability spaces, which potentially contain atoms. The gap for law invariant risk measures on spaces, which contain -- or do not contain -- atoms, is clarified and sufficiently described.


\begin{thebibliography}{99}

\bibitem{ADEH:1999}
 P. Artzner, F.  Delbaen, J.-M.   Eber
 and D.  Heath,
  Coherent measures of risk,
  {\em Mathematical Finance},   9 (1999),  203--228.


\bibitem{billin}
P. Billingsley, {\em Convergence of Probability Measures},
Wiley, New York, 1999 (Second Edition).

\bibitem{dana}
R.A. Dana,   A representation result for concave Schur concave
functions, {\em  Mathematical Finance},  15 (2005), 613--634.

\bibitem{DentPenevRusz}
D. Dentcheva, S. Penev and A. Ruszczy\'nski,
  Kusuoka representation of higher order dual risk measures,
  {\em Ann. Oper Res}, 181  (2010),  325--335.

\bibitem{Filipovic2012}
D. Filipovic and G. Svindland,
  The canonical model space for law-invariant convex risk measures is $L^1$,
  {\em Mathematical Finance}, 22 (2012), 585--589.

\bibitem{fol04} H. F\"ollmer and A, Schield,
  {\em Stochastic Finance: An Introduction In Discrete Time}, De Gruyter Studies in Mathematics, 2nd Edition, Berlin, 2004.

\bibitem{Grechuk2012}
  B. Grechuk and M. Zabarankin,
 Schur convex Functionals: Fatou property and Representation,
  {\em Mathematical Finance}, 22 (2012), 411--418.

  \bibitem{Schachermayer}
E. Jouini, W. Schachermayer and N. Touzi,
  Law Invariant Risk Measures have the Fatou Property,
  {\em Advances in Mathematical Economics}, 9 (2006),
 49--71.


\bibitem{Kusuoka}
S. Kusuoka, On law invariant coherent risk measures,
  {\em Advances in Mathematical Economics}, 3 (2001), 83--95.

\bibitem{LAR:79}
D. G. Larman and R. R. Phelps,
Gateaux differentiability of convex functions on Banach spaces,
{\em J. London Math. Soc.,}
  20 (1979), 115--127.

\bibitem{maz:33}
  S. Mazur,  \"Uber konvexe Mengen in linearen normierten R\"aumen, {\em Studia Math.,} 4 (1933), 70--84.

  \bibitem{mulst}
A. M\"uller and D. Stoyan,   {\em Comparaisons Methods for Stochastic Models and Risks}, Wiley,
New York, 2002.

\bibitem{pfl07}
G.~Ch. Pflug and W. R\"omisch, {\em Modeling, Measuring and Managing Risk}, World Scientific Publishing Co., London, 2007.


\bibitem{RS:2006}
A. Ruszczy\'{n}ski and  A. Shapiro,
  Optimization of convex risk functions,
  {\em Mathematics of Operations Research},   31 (2006), 433--452.


\bibitem{SDR}
  A. Shapiro,  D.    Dentcheva   and  A.  Ruszczy\'{n}ski,
  {\em Lectures on Stochastic Programming: Modeling and Theory},  SIAM, Philadelphia, 2009.

\bibitem{sha:2012}
  A. Shapiro,  Minimax and risk averse multistage stochastic programming, {\em European Journal of Operational Research},  219 (2012), 719--726.


\bibitem{mor12}
  A. Shapiro,
  On Kusuoka representation of law invariant risk measures,
  {\em Mathematics of Operations Research}, to appear.

\bibitem{vdVaart}
  A. W. van der Vaart,
  {\em Asymptotic Statistics}, Cambridge University Press, 1998.



\end{thebibliography}
\end{document}